\email{\href{mailto:a.van.werde@uni-muenster.de}{a.van.werde@uni-muenster.de}}
\address{University of Münster, Germany}
\begin{document}
\begin{abstract}
    Sufficient conditions for a simple graph to be characterized up to isomorphism given its spectrum and the spectrum of its complement graph are known due to Wang and Xu. 
    This note establishes a related sufficient condition in the presence of loops: if the walk matrix has square-free determinant, then the graph is characterized by its generalized spectrum.
    The proof includes a general result about symmetric integral matrices.
\end{abstract}
\maketitle
\vspace{-1.5em}
\section{Introduction}
Let $G$ be an undirected graph, possibly including self-loops.
The spectrum of its adjacency matrix encodes significant combinatorial information, and one may wonder whether this is sufficient to characterize the graph up to isomorphism.
This question dates back to the 1950's when it was posed by G\"unthard and Primas in a paper on quantum chemistry \cite{gunthard1956zusammenhang}. 
It can also be viewed a graph-theoretic variant to Kac's  famous question ``Can one hear the shape of a drum?'' \cite{kac1966can}.

A significant open problems in the field is Haemers' conjecture, which posits that almost all graphs are characterized by their adjacency spectrum \cite{van2003graphs,haemers2016almost}. 
Unfortunately, it is generally difficult to determine whether a given graph is characterized by its adjacency spectrum. 
While there are many methods that one can try to use to produce another graph with the same spectrum \cite{schwenk1973almost,abiad2024switching,godsil1982constructing,wang2019cospectral}, there is no easy way to verify that a graph is characterized by its adjacency spectrum aside from exhaustive enumeration, unless the graph belongs to some special class whose structure can be exploited \cite{doob2002complement,shen2005graph,haemers2008spectral}. 

\pagebreak[3]
Notably, however, Wang and Xu \cite{wang2006sufficient,wang2017simple} found that tractable sufficient conditions can be established if one is also given the spectrum of the complement graph.
By a theorem of Johnson and Newman \cite{johnson1980note} it is equivalent to consider the bivariate polynomial $\varphi_G(\lambda,t) \de \det(\lambda\bI - \bA  - t \bJ)$ where $\bA$ is the adjacency matrix, $\bJ$ is the all-ones matrix, and $\bI$ is the identity matrix. 
Two graphs $G,H$ are called \emph{$\bbR$-cospectral} if $\varphi_G(\lambda,t) = \varphi_H(\lambda,t)$ for all $\lambda,t\in \bbR$. 
The graph $G$ is said to be \emph{characterized by its $\bbR$-spectrum} if any such $H$ is isomorphic to $G$. 
\pagebreak[3]

The \emph{walk matrix} is defined column-wise as $\bW \de [e,\bA e,...,\bA^{n-1}e]$ with $e \de (1,\ldots,1)^{\T}$ the all-ones vector.
An integer $M\in \bbZ$ is \emph{square free} if there does not exist any prime $p$ with $p^2\mid M$. 
\begin{theorem}[{Wang \cite{wang2017simple}}]\label{thm: Wang}
    Suppose that $G$ is simple and that $\det(\bW)/2^{\lfloor n/2 \rfloor}$ is odd and square-free integer. 
    Then, $G$ is characterized by its $\bbR$-spectrum.  
\end{theorem}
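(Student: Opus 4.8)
The plan is to reduce the statement to a question about a single rational orthogonal matrix and then to bound its denominators one prime at a time. First I would invoke the standard reformulation of $\bbR$-cospectrality: for controllable graphs (those with $\det(\bW) \neq 0$), $G$ and $H$ are $\bbR$-cospectral if and only if there is a rational orthogonal matrix $Q$ with $Q e = e$ and $Q^{\T} \bA_G Q = \bA_H$. Since $Q$ intertwines the powers of the adjacency matrices and fixes $e$, one gets $Q \bW_H = \bW_G$, so $Q = \bW_G \bW_H^{-1}$ is uniquely determined. The hypothesis makes $\det(\bW_G)$ a nonzero integer, hence $G$ is controllable; and because $\det(\bW)^2$ is determined by the generalized spectrum, $|\det(\bW_H)| = |\det(\bW_G)|$, so $H$ is controllable too and the hypothesis applies symmetrically. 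The theorem then reduces to showing that $Q$ is a permutation matrix. The elementary observation that closes the reduction is that an integral orthogonal matrix is a signed permutation matrix, and the constraint $Q e = e$ forces every sign to be $+1$; hence it suffices to prove that the \emph{level} $\ell$ of $Q$ -- the least positive integer with $\ell Q$ integral -- equals $1$.

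To bound $\ell$, set $P \de \ell Q \in \bbZ^{n \times n}$, which satisfies $P^{\T} P = \ell^2 \bI$, $P e = \ell e$, and the pair of relations $P \bW_H = \ell \bW_G$ and $P^{\T} \bW_G = \ell \bW_H$. By Cramer's rule applied to $Q = \bW_G \bW_H^{-1}$, the level divides $|\det(\bW_H)| = |\det(\bW)|$, so it suffices to rule out each prime dividing $\det(\bW)$. I would treat odd primes and the prime $2$ separately, matching the two parts of the hypothesis.

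Fix an odd prime $p$ and suppose $p \mid \ell$. Reducing the relations modulo $p$ gives $P^{\T} P \equiv 0$ and $P^{\T} \bW_G \equiv 0$ over $\bbZ/p\bbZ$, so the column space of $\bW_G$ is orthogonal to the column space of $P$; writing $r \de \operatorname{rank}_p(P) \geq 1$ this yields $\operatorname{rank}_p(\bW_G) \leq n - r \leq n - 1$, whence $p \mid \det(\bW_G)$. Since $\det(\bW)/2^{\lfloor n/2 \rfloor}$ is square-free, $p$ divides $\det(\bW_G)$ exactly once, which forces $\operatorname{rank}_p(\bW_G) = n - 1$ and therefore $r = 1$. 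Thus $P \equiv u v^{\T} \pmod p$ with $u, v$ nonzero; the isotropy relation $P^{\T} P \equiv 0$ gives $(u^{\T} u)\, v v^{\T} \equiv 0$, hence $u^{\T} u \equiv 0$, while $P^{\T} \bW_G \equiv 0$ shows that $u$ spans the one-dimensional left null space of $\bW_G$ modulo $p$. The contradiction I would extract is that this null direction cannot be isotropic when $p$ exactly divides $\det(\bW_G)$: working with the symmetric matrix $\bW_G \bW_G^{\T}$, whose determinant equals $\det(\bW_G)^2$ and so is divisible by exactly $p^2$, one analyses its local structure at $p$ to show that the null direction carries a nondegenerate self-pairing, i.e.\ $u^{\T} u \not\equiv 0 \pmod p$. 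This non-isotropy step is precisely where the ``exactly once'' content of square-freeness is used, and I expect it to be the main obstacle of the whole argument.

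It remains to show that $\ell$ is odd, the part genuinely special to simple graphs. Here the naive rank bound is too weak, because $2$ divides $\det(\bW)$ to the large power $\lfloor n/2 \rfloor$. The extra structural input is that for a simple graph $\bA$ is symmetric with zero diagonal, so modulo $2$ it defines an alternating form and $\operatorname{rank}_2(\bA)$ is even; propagating this through the Hankel matrix of walk counts $\bW^{\T} \bW$ shows that $2^{\lfloor n/2 \rfloor}$ always divides $\det(\bW)$, and the assumption that $\det(\bW)/2^{\lfloor n/2 \rfloor}$ is odd says that this $2$-adic valuation is \emph{exactly} $\lfloor n/2 \rfloor$. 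Assuming $2 \mid \ell$ and reducing $P^{\T} P = \ell^2 \bI$ and $P \bW_H = \ell \bW_G$ modulo $2$ again produces a totally isotropic subspace attached to $P$; but exactness of the power $2^{\lfloor n/2 \rfloor}$ pins the $2$-rank of $\bW_G$ at its maximal admissible value and leaves no room for such a subspace, a contradiction. Hence $2 \nmid \ell$. Combining the two cases, no prime divides $\ell$, so $\ell = 1$, $Q$ is a permutation matrix, and $G \cong H$.
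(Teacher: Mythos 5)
This statement is not proved in the paper at all: Theorem~\ref{thm: Wang} is imported from Wang \cite{wang2017simple}, and the paper's own machinery (Lemmas \ref{lem: PhiOrthogonal}--\ref{lem: SingleRoot}) is built to prove the loop analogue, Theorem~\ref{thm: WithLoops}. So your proposal can only be judged against the known proof of Wang's theorem, and against the paper's strategy for the analogous result. Your reduction is sound and standard: the Johnson--Newman reformulation, $\bQ = \bW_G\bW_H^{-1}$, the level $\ell$ with $\ell \mid \det(\bW)$, and for an odd prime $p$ with $p \mid \ell$ the chain $\operatorname{rank}_p(\bW_G) = n-1$, $\ell\bQ \equiv uv^{\T}$, $u^{\T}u \equiv 0$, $u^{\T}\bW_G \equiv 0 \pmod p$ is all correct. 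But the proof stops exactly where the theorem begins. The step you flag as ``the main obstacle'' --- that the one-dimensional left null space of $\bW_G$ modulo $p$ cannot be isotropic when $p$ divides $\det(\bW_G)$ only once --- \emph{is} the key lemma of Wang--Xu/Wang, and ``one analyses its local structure at $p$'' is not an argument. The known proof needs real work here: one uses that $u$ is an eigenvector of $\bA$ modulo $p$ (because the null space of $\bW_G^{\T}$ mod $p$ is $\bA$-invariant and one-dimensional), deduces from $u^{\T}u \equiv 0$ and $u^{\T}\bW_G \equiv 0$ that $u$ lies in the column space of $\bW_G$ mod $p$, and then manufactures a second factor of $p$ in $\det(\bW_G)$, contradicting square-freeness. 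Notably, the paper's proof of the loop analogue deliberately takes a different route that avoids this isotropy argument altogether: it compares the mod-$p$ kernels of the two walk matrices via minimal polynomials after shifting to $\bX + t_0\zeta\zeta^{\T}$ (Lemmas \ref{lem: DistinctRoots}--\ref{lem: SingleRoot}), then invokes Lemma \ref{lem: StrongerLevelBound}.

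The second and larger gap is the prime $2$, which is precisely what distinguishes Theorem~\ref{thm: Wang} from the easier odd-prime statement and is the actual content of \cite{wang2017simple}. Here your one-dimensional-kernel framework does not even start: for a simple graph $\operatorname{rank}_2(\bW) \leq \lceil n/2\rceil$, so the mod-$2$ kernel has dimension about $n/2$, and the sentence ``exactness of the power $2^{\lfloor n/2\rfloor}$ pins the $2$-rank at its maximal admissible value and leaves no room for such a subspace'' does not identify any mechanism for a contradiction --- there is an enormous totally isotropic subspace available, and ruling out $2 \mid \ell$ under the hypothesis that $\det(\bW)/2^{\lfloor n/2\rfloor}$ is odd is a delicate, multi-page analysis in Wang's paper. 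As the present paper points out, avoiding exactly this difficulty is the motivation for passing to graphs with loops, where $2$ needs no special treatment. So: correct skeleton, but both load-bearing steps are asserted rather than proved.
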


We establish a variation on Theorem \ref{thm: Wang} for graphs with loops: 

\begin{theorem}\label{thm: WithLoops}
    Consider a graph $G$ that possibly has loops and suppose that $\det(\bW)$ is square-free. Then, $G$ is characterized by its $\bbR$-spectrum. 
\end{theorem}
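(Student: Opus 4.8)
The plan is to run the Wang–Xu argument at the level of symmetric integer matrices, the only genuinely new input being a prime‑by‑prime estimate that survives the prime $2$ once loops are present. First I would invoke the Johnson–Newman correspondence: if $H$ is $\bbR$-cospectral with $G$, there is a rational orthogonal matrix $\mathbf{Q}$ with $\mathbf{Q}e=e$ and $\mathbf{Q}^\T\bA\mathbf{Q}=\mathbf{B}$, where $\bA,\mathbf{B}$ are the adjacency matrices of $G,H$ (now symmetric integer matrices with possibly nonzero diagonal). Writing $\bW$ and $\bW_H$ for the walk matrices of $\bA$ and $\mathbf{B}$, the identities $\mathbf{Q}^\T e=e$ and $\mathbf{Q}^\T\bA^k\mathbf{Q}=\mathbf{B}^k$ give $\mathbf{Q}^\T\bW=\bW_H$; hence $\bW^\T\bW=\bW_H^\T\bW_H$, so $\det\bW=\pm\det\bW_H$, and the square-free (in particular nonzero) hypothesis forces both walk matrices to be invertible. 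Since $\mathbf{Q}^\T=\bW_H\bW^{-1}$ and $\bW^{-1}=\operatorname{adj}(\bW)/\det\bW$, the level $\ell$ of $\mathbf{Q}$ (the least positive integer with $\ell\mathbf{Q}$ integral) divides $\det\bW$. The theorem then reduces to showing $\ell=1$: for then $\mathbf{Q}$ is an integral orthogonal matrix fixing $e$, i.e.\ a permutation matrix, and $\mathbf{B}=\mathbf{Q}^\T\bA\mathbf{Q}$ is a relabeling of $\bA$, giving $G\cong H$.

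Since $\ell\mid\det\bW$ is square-free, it suffices to show that no prime $p$ divides $\ell$, which I would phrase as the advertised general statement: \emph{if $p\mid\ell(\mathbf{Q})$ then $p^2\mid\det\bW$}. Fix such a $p$ and set $\mathbf{N}\de\ell\mathbf{Q}$, an integer matrix whose entries have gcd coprime to $\ell$ (by minimality of the level), so its reduction $\overline{\mathbf{N}}$ modulo $p$ is nonzero. From $\mathbf{N}^\T\mathbf{N}=\ell^2\bI$ and $p\mid\ell$ one gets $\overline{\mathbf{N}}^\T\overline{\mathbf{N}}=0$ over $\mathbb{F}_p$, so the columns of $\overline{\mathbf{N}}$ span a totally isotropic subspace; and from $\mathbf{N}^\T\bW=\ell\bW_H\equiv 0$ one gets $\operatorname{Col}(\overline{\bW})\subseteq\ker(\overline{\mathbf{N}}^\T)$. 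Writing $r\de\operatorname{rank}_{\mathbb{F}_p}\overline{\mathbf{N}}$, this yields $\operatorname{rank}_{\mathbb{F}_p}\overline{\bW}\le n-r$, and since the number of invariant factors of $\bW$ divisible by $p$ is $n-\operatorname{rank}_{\mathbb{F}_p}\overline{\bW}$, we obtain $v_p(\det\bW)\ge r$. Thus everything comes down to ruling out $r=1$.

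The rank-one case is the heart of the matter, and the step I expect to be the main obstacle. Here $\overline{\mathbf{N}}=uw^\T$ for nonzero $u,w\in\mathbb{F}_p^n$, and reducing $\mathbf{N}^\T\mathbf{N}=\ell^2\bI=\mathbf{N}\mathbf{N}^\T$, $\mathbf{N}e=\ell e=\mathbf{N}^\T e$, and $\mathbf{N}^\T\bA\mathbf{N}=\ell^2\mathbf{B}$ forces $u$ and $w$ to be isotropic and orthogonal to $e$, with $u^\T\overline{\bA}\,u=0$. I would try to extract a contradiction at second order: because $v_p(\ell^2)\ge 2$ one in fact has $\mathbf{N}^\T\mathbf{N}\equiv 0\pmod{p^2}$, and expanding $\mathbf{N}$ in its $p$-adic digits about the rank-one reduction turns this congruence—together with the symmetry of $\bA,\mathbf{B}$ and the integrality of $\mathbf{N}$—into a relation that an isotropic $u$ can satisfy only if $\overline{\bW}$ drops rank a second time, in either case giving $v_p(\det\bW)\ge 2$. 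This is exactly where loops matter: for loopless $G$ the diagonal of $\bA$ vanishes, over $\mathbb{F}_2$ the form becomes alternating, the prime $2$ escapes the isotropy obstruction, and one is pushed back to Wang's extra factor $2^{\lfloor n/2\rfloor}$; permitting a nonzero diagonal destroys this degeneracy and should let $p=2$ be handled on the same footing as odd primes.

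The remaining ingredients—the Johnson–Newman reduction, the divisibility $\ell\mid\det\bW$, and the passage from $\ell=1$ to a graph isomorphism—are routine and insensitive to the diagonal, so I would expect essentially all of the work, and the only place where uniformity in $p$ (including $p=2$) must be established, to sit in the second-order analysis of the rank-one configuration described above.
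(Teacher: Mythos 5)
Your reduction is sound as far as it goes, and it coincides with the paper's setup: the Johnson--Newman step producing a rational orthogonal $\bQ$ with $\bQ e=e$, the identity $\bQ^{\T}\bW = \bW_H$, the divisibility $\ell \mid \det\bW$, and the observation that a level-one $\bQ$ fixing $e$ is a permutation are all present in the paper (Lemmas \ref{lem: PhiOrthogonal}--\ref{lem: EllDividesDn}). Your dichotomy on $r=\operatorname{rank}_{\bbF_p}(\ell\bQ)$ is also fine, and the case $r\geq 2$ correctly gives $p^2\mid\det\bW$ via the Smith normal form. The problem is that the case $r=1$ --- which you yourself flag as ``the heart of the matter'' --- is not proved. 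The constraints you extract there ($u$ and $w$ isotropic, orthogonal to $e$, and $u^{\T}\overline{\bA}u=0$) are genuinely available but are nowhere near a contradiction: they are finitely many linear and quadratic conditions on a single vector over $\bbF_p$ and are easily satisfiable in general. The proposed escape --- ``expanding $\bN$ in its $p$-adic digits'' and claiming the second-order congruence $\bN^{\T}\bN\equiv 0 \pmod{p^2}$ forces $\overline{\bW}$ to drop rank twice --- is asserted, not argued; no mechanism is given by which the first digit $\bN_1$ is controlled, and the congruence $\bN_0^{\T}\bN_0 \equiv -p(\bN_1^{\T}\bN_0+\bN_0^{\T}\bN_1) \pmod{p^2}$ involves the unknown $\bN_1$ in a way that does not obviously constrain $\bW$ at all. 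As written, the central implication ``$p\mid\ell$ and $r=1$ $\Rightarrow$ $p^2\mid\det\bW$'' is a conjecture, not a proof.

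For comparison, the paper closes exactly this gap by a different and fully worked-out route: assuming $p^2\nmid\det\bW$, it shows $\rank_p(\bW_{\bX,\zeta})=\rank_p(\bW_{\bY,\eta})=n-1$ (Lemma \ref{lem: PSquareRank}), proves that the two walk matrices have the same kernel mod $p$ whenever the characteristic polynomial has no two distinct roots in $\bbF_p$ --- via a minimal-polynomial argument rather than any $p$-adic expansion (Lemma \ref{lem: DistinctRoots}) --- and then manufactures that root condition by replacing $\bX$ with $\bX+t_0\zeta\zeta^{\T}$ for a suitable $t_0$ (Lemmas \ref{lem: PhiDecomposition}--\ref{lem: SingleRoot}). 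The agreement of kernels is what upgrades $\ell\mid\det\bW$ to $\ell\mid\det\bW/p$ (Lemma \ref{lem: StrongerLevelBound}) and kills the prime $p$. If you want to salvage your outline, the missing ingredient is precisely some such comparison between the mod-$p$ kernels of $\bW$ and $\bW_H$; the isotropy conditions on $u$ alone will not suffice.
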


While Theorem \ref{thm: WithLoops} also allows simple graphs, it is vacuous in this case. 
Indeed, it can be shown that  $2^{\lfloor n/2 \rfloor}$ always divides the determinant of the walk matrix for simple graphs \cite[Lemma 14]{wang2013generalized}.
For graphs with loops, it can occur that $2^2$ does not divide the determinant; see Figure \ref{fig: ExampleGraphWalk} for example.
Adding loops hence has the nice feature that it eliminates the need to treat the prime $p=2$ separately.

\begin{figure}[t]
    \centering
\begin{minipage}[t]{.34\textwidth}
    \centering
    \large 
    \ \ \ \ \ \textbf{ Graph}\\ 
    \includegraphics[width = \textwidth]{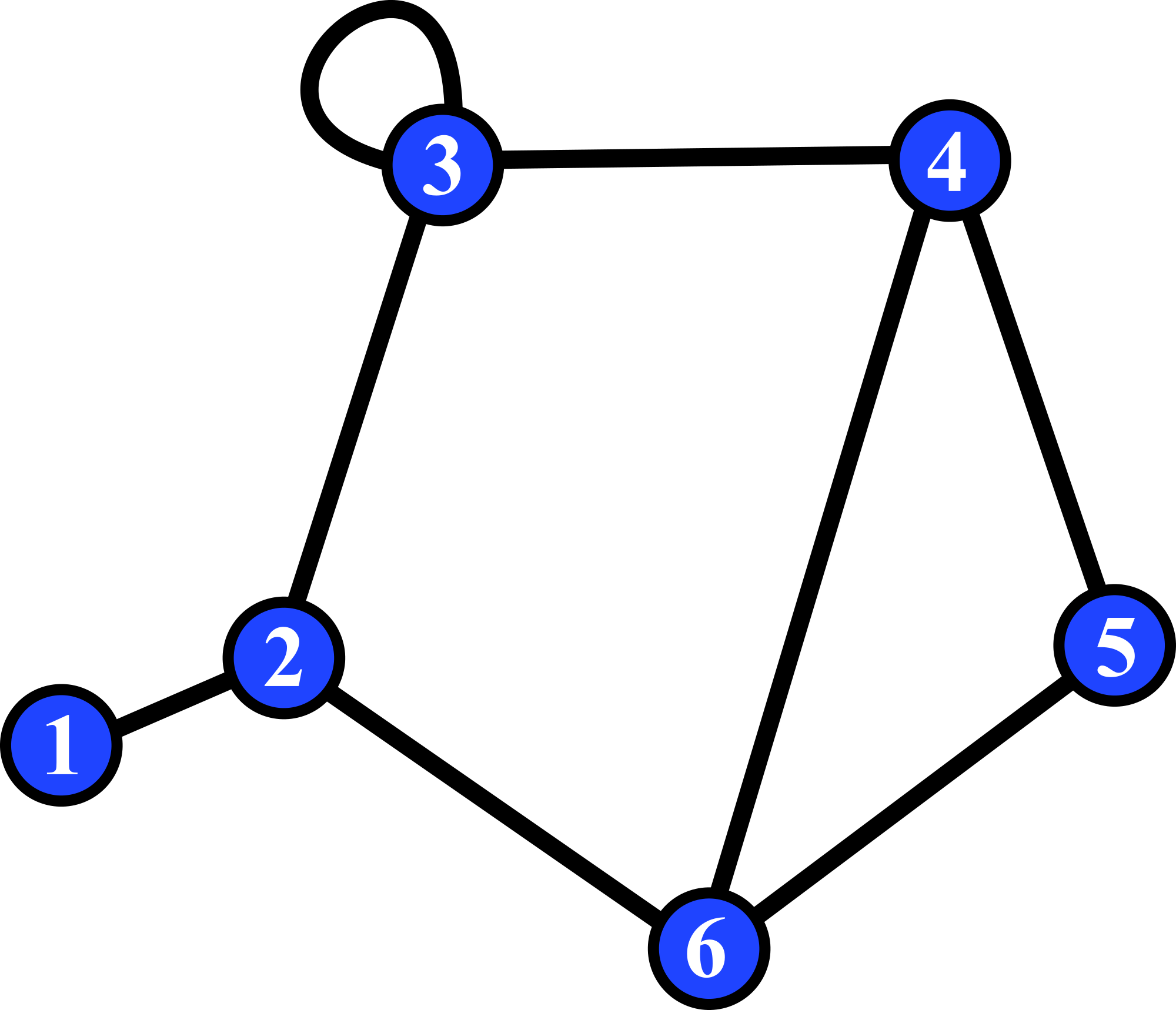}
\end{minipage}%
\hfill 
\begin{minipage}[t]{0.29\textwidth}
    \centering 
    \centering
    \large 
    \textbf{Adjacency}\\ 
    \ \\     
    \begingroup
    \setlength\arraycolsep{4pt}
    $\begin{pmatrix}
        \gray{0}&\mathbf{1}&\gray{0}&\gray{0}&\gray{0}&\gray{0}\\
        \mathbf{1}&\gray{0}&\mathbf{1}&\gray{0}&\gray{0}&\mathbf{1}\\
        \gray{0}&\mathbf{1}&\mathbf{1}&\mathbf{1}&\gray{0}&\gray{0}\\
        \gray{0}&\gray{0}&\mathbf{1}&\gray{0}&\mathbf{1}&\mathbf{1}\\
        \gray{0}&\gray{0}&\gray{0}&\mathbf{1}&\gray{0}&\mathbf{1}\\
        \gray{0}&\mathbf{1}&\gray{0}&\mathbf{1}&\mathbf{1}&\gray{0}  
    \end{pmatrix}$
    \endgroup
\end{minipage}%
\hfill 
\begin{minipage}[t]{0.34\textwidth}
    \centering
    \centering
    \large 
    \textbf{Walk matrix}
    \\ 
    \ \\ 
    \begingroup
    \setlength\arraycolsep{4pt}
    $\begin{pmatrix}
        1&1&3&7&20&52\\
        1&3&7&20&52&146\\
        1&3&9&24&67&180\\
        1&3&8&23&61&170\\
        1&2&6&16&44&120\\
        1&3&8&21&59&157
    \end{pmatrix}$
    \endgroup
    \end{minipage}
    \label{fig:prob3}
    \caption{An example of a graph with loops and associated matrices. 
    Note that the $ij$-th entry of the walk matrix counts walks of length $j-1$ starting at vertex $i$, explaining the terminology. 
    It here holds that $\det(\bW) = -3$. 
    Thus, the graph is characterized by its $\bbR$-spectrum due to Theorem \ref{thm: WithLoops}.}
    \label{fig: ExampleGraphWalk}
\end{figure}

Our proofs also enable generalizations of \Cref{thm: WithLoops} where the adjacency matrix may be replaced by an arbitrary symmetric integer matrices and the all-ones vector by an arbitrary integer vector; see \Cref{thm: SqfreeIntegral}. 
Aside from its independent interest, this general setting is helpful to simplify probabilistic analysis, which was our main motivation for the present paper. 
In particular, random matrix models that are expected to behave similarly to graphs with loops are analyzed in a forthcoming work of Lvov and the present author \cite{vanwerde2025}, leading to conjectures for the satisfaction frequency of \Cref{thm: SqfreeIntegral} when the integer vector is also taken to be random.
We return to this point in \Cref{conj: Satisfaction}.

\section{Proof of \texorpdfstring{Theorem \ref{thm: WithLoops}}{Theorem}}
We establish a result in a more general framework. 
Consider a vector $\zeta\in \bbZ^n$ and an integral matrix $\bX \in \bbZ^{n\times n}$ that is symmetric $\bX = \bX^{\T}$. 
Two pairs $(\bX,\zeta)$ and $(\bY, \eta)$ are \emph{isomorphic up to sign} if there exists a signed permutation $\bS$ such that $\bS\bX \bS^{\T} = \bY$ and $\bS \zeta = \eta$. 
A \emph{signed permutation} is a matrix of the form $\bS = \bP \bD$ with $\bP \in \{0,1 \}^{n\times n}$ a permutation and $\bD$ a diagonal matrix with diagonal entries $\pm 1$. 

\pagebreak[4]
Note that the polynomial $\Phi_{\bX,\zeta}(\lambda,t) \de \det(\lambda\bI - \bX  - t\zeta  \zeta^{\T})$ is an invariant for this notion of isomorphism.\label{p: Def_Phi} 
The pair $(\bX,\zeta)$ is \emph{characterized by $\Phi$-spectrum} if every $(\bY,\eta)$ with $\Phi_{\bX,\zeta} = \Phi_{\bY,\eta}$ is isomorphic up to sign.
Observe that restricting this notion to adjacency matrices of graphs with $\zeta = e$ recovers $\bbR$-spectral characterization because nonnegative matrices are related through a signed permutation if and only if the signs can be taken to be positive, and because permutation corresponds to graph isomorphism. 
We establish a sufficient condition for $\Phi$-spectral characterization in Theorem \ref{thm: SqfreeIntegral} that recovers Theorem \ref{thm: WithLoops} as a special case.  

A condition related to Theorem \ref{thm: Wang} for a (simple) graph to be characterized by the $\Phi$-spectrum with a vector different from the all-ones vector was studied by Qiu, Ji, Mao, and Wang \cite{qiu2023generalized} motivated by the fact that Wang's condition \cite{wang2006sufficient} is not applicable to regular graphs. 
Further, conditions for integral matrices with diagonal zero have been considered by Miao, Li, and Zhang \cite{miao2026smith}. 
Symmetric integral matrices with nonzero diagonal, as are necessary in the presence of loops, have also been studied by Wang and Yu \cite{wang2016square}, who established conditions of a different nature using the discriminant of the characteristic polynomial instead of the determinant of the walk matrix.  
One can combine \cite{wang2016square} with our subsequent results to achieve more powerful sufficient conditions; see Remark \ref{rem: WangYuDiscriminant}.

Our proof takes inspiration from recent work by Qui, Wang, Wang, and Zhang \cite{qiu2023smith} who established a refinement of Wang's condition \cite{wang2006sufficient} with a new approach that is cleaner than previous works in the literature. 
We justify that the approach can also be made to work in the presence of loops, giving different arguments in some places; see also the discussion preceding Lemma \ref{lem: DistinctRoots}. 
Further, the full strength of \cite{qiu2023smith} was not needed for Theorem \ref{thm: WithLoops} and we developed alternative claims and proofs in a number of steps to achieve further simplifications. 

\subsection{Characterization in terms of orthogonal matrices}
The following characterization of $\Phi$-cospectrality is analogous to one for $\bbR$-cospectrality that was first discovered by Johnson and Newman \cite{johnson1980note}.
Alternative proofs and extensions have since also appeared  \cite{qiu2023generalized,wang2006sufficient,coutinho2021graph,wang2024graph}. 
We give a proof for completeness. 
\begin{lemma}\label{lem: PhiOrthogonal}
    Two pairs $(\bX,\zeta)$ and $(\bY, \eta)$ are $\Phi$-cospectral if and only if there exists an orthogonal matrix $\bQ \in \bbR^{n\times n}$ with $\bQ \zeta = \eta$ and $\bQ \bX \bQ^{\T} = \bY$. 
\end{lemma}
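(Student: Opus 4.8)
The plan is to reduce the bivariate identity to two univariate conditions and then realise the resulting spectral data geometrically. The \emph{if} direction is immediate: if $\bQ$ is orthogonal with $\bQ\zeta=\eta$ and $\bQ\bX\bQ^{\T}=\bY$, then $\bQ(\lambda\bI-\bX-t\zeta\zeta^{\T})\bQ^{\T}=\lambda\bI-\bY-t\eta\eta^{\T}$, and taking determinants (using $\det\bQ=\pm1$) gives $\Phi_{\bX,\zeta}=\Phi_{\bY,\eta}$.

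For the \emph{only if} direction, first I would treat $t\zeta\zeta^{\T}$ as a rank-one perturbation and apply the matrix determinant lemma to write
$$\Phi_{\bX,\zeta}(\lambda,t)=\det(\lambda\bI-\bX)-t\,\zeta^{\T}\operatorname{adj}(\lambda\bI-\bX)\zeta.$$
Comparing the coefficients of $t^0$ and $t^1$ in $\Phi_{\bX,\zeta}=\Phi_{\bY,\eta}$ then yields two polynomial identities in $\lambda$: equality of characteristic polynomials $\det(\lambda\bI-\bX)=\det(\lambda\bI-\bY)$, which shows $\bX$ and $\bY$ share the same multiset of eigenvalues, and equality of the adjugate quadratic forms $\zeta^{\T}\operatorname{adj}(\lambda\bI-\bX)\zeta=\eta^{\T}\operatorname{adj}(\lambda\bI-\bY)\eta$, which encodes the equality of main angles.

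Next I would diagonalise: choose orthogonal $\mathbf{U},\mathbf{V}$ with $\mathbf{U}^{\T}\bX\mathbf{U}=\mathbf{V}^{\T}\bY\mathbf{V}=\Lambda$, a common diagonal matrix of eigenvalues $\lambda_1,\dots,\lambda_n$ (possible by the first identity, ordering the eigenvalues compatibly). Setting $a\de\mathbf{U}^{\T}\zeta$ and $b\de\mathbf{V}^{\T}\eta$, the adjugate form becomes $\zeta^{\T}\operatorname{adj}(\lambda\bI-\bX)\zeta=\det(\lambda\bI-\bX)\sum_i a_i^2/(\lambda-\lambda_i)$, and similarly for $\bY$. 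Dividing the second identity by the common characteristic polynomial and invoking uniqueness of the partial fraction decomposition, the residues at each pole must agree: for every distinct eigenvalue $\theta$ one gets $\sum_{i:\lambda_i=\theta}a_i^2=\sum_{j:\lambda_j=\theta}b_j^2$, i.e.\ the restrictions $a|_{\theta}$ and $b|_{\theta}$ to each eigenvalue coordinate block have equal Euclidean norm. Then within each block there is an orthogonal matrix $\mathbf{R}_{\theta}$ with $\mathbf{R}_{\theta}(a|_{\theta})=b|_{\theta}$ (taking the identity when both vanish), so the block-diagonal $\mathbf{R}=\operatorname{diag}(\mathbf{R}_{\theta})$ is orthogonal, commutes with $\Lambda$, and satisfies $\mathbf{R}a=b$. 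Finally $\bQ\de\mathbf{V}\mathbf{R}\mathbf{U}^{\T}$ is orthogonal with $\bQ\bX\bQ^{\T}=\mathbf{V}\mathbf{R}\Lambda\mathbf{R}^{\T}\mathbf{V}^{\T}=\mathbf{V}\Lambda\mathbf{V}^{\T}=\bY$ and $\bQ\zeta=\mathbf{V}\mathbf{R}a=\mathbf{V}b=\eta$.

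I expect the main obstacle to be the passage from the polynomial identity to the equality of residues and its geometric interpretation: one must argue that coincidence of the two rational functions $\sum_i a_i^2/(\lambda-\lambda_i)$ forces the residue at each pole to match, and then observe that two equal-norm vectors lying in a common eigenspace are always related by an orthogonal map of that space. Both points are elementary, but they carry the content of the statement; the surrounding conjugations are routine.
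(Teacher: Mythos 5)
Your proposal is correct and follows essentially the same route as the paper: the matrix determinant lemma reduces $\Phi$-cospectrality to equality of the characteristic polynomials and of the rational functions $\zeta^{\T}(\lambda\bI-\bX)^{-1}\zeta$, whose partial-fraction residues give equal norms of the projections of $\zeta$ and $\eta$ onto corresponding eigenspaces, and the orthogonal matrix is then assembled blockwise. The only difference is presentational — you work in an explicit diagonalizing basis where the paper phrases the same construction via orthogonal projections $\bP_{V(\bX,\mu)}$.
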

\begin{proof} 
    That the existence of an orthogonal matrix $\bQ$ with $\bQ \bX \bQ^{\T} = \bY$ and $\bQ \zeta = \eta$ implies $\Phi$-cospectrality is clear, since we then have  
\begin{equation}
    \det\bigl(\lambda\bI -\bX - t\zeta \zeta^{\T} \bigr) = \det\bigl(\bQ ( \lambda\bI -\bX- t\zeta \zeta^{\T}) \bQ^{\T}\bigr) = \det( \lambda\bI -\bY- t \eta \eta^{\T}). 
\end{equation}
    Now assume $\Phi$-cospectrality, and let us establish the existence of some $\bQ$.

    Consider some arbitrary $\lambda$ that is not an eigenvalue of $\bX$, nor of $\bY$. 
    Using that $\bX -\lambda\bI$ is then invertible and that $\det(\bI - vw^{\T} ) = 1 - v^{\T} w$ for every $v,w\in \bbR^n$, 
    \begin{align}
        \Phi_{\bX,\zeta}(\lambda,t) &=  \det( \lambda\bI - \bX)\det\bigl(\bI - t( \lambda\bI - \bX)^{-1} \zeta \zeta^{\T} \bigr)\nonumber \\ 
        &= \det( \lambda\bI - \bX) (1 - t\zeta^{\T} (\lambda \bI -\bX)^{-1} \zeta).\label{eq:TallHome}
    \end{align}
    In other words, $\Phi_{\bX,\zeta}(\lambda,t) = \Phi_{\bX,\zeta}(\lambda,0) (1 - t\zeta^\T (\lambda\bI - \bX )^{-1} \zeta )$. 
    Using the same identity applied to $\Phi_{\bY,\eta}$ and the assumption of $\Phi$-cospectrality now yields that 
    \begin{equation}
         \zeta^\T (\lambda\bI - \bX )^{-1} \zeta = \eta^\T ( \lambda\bI - \bY)^{-1} \eta. \label{eq:SneakyCup}
    \end{equation}
    Let $\sigma \subseteq\bbR$ be the spectrum of $\bX$, which is also the spectrum of $\bY$. 
    Further, for every $\mu \in \sigma$ let $V(\bX,\mu) \subseteq\bbR^n$ denote the corresponding eigenspace of $\bX$, and let $\bP_{V(\bX,\mu)}  $ be the associated orthogonal projection.  
    Then, $(\lambda\bI - \bX)^{-1} = \sum_{\mu \in\sigma} (  \lambda - \mu )^{-1}\bP_{V(\bX,\mu)}$ and a similar identity applies to $(\lambda \bI - \bY )^{-1}$. 
    Hence, by \eqref{eq:SneakyCup}, 
    \begin{equation}
        \sum_{\mu \in \sigma } \frac{\Vert \bP_{V(\bX,\mu)} \zeta \Vert^2}{\lambda - \mu} =  \sum_{\mu \in \sigma } \frac{\Vert  \bP_{V(\bY,\mu)} \eta \Vert^2}{\lambda - \mu}.\label{eq:FairTrunk} 
    \end{equation}
    This implies that $\Vert \bP_{V(\bX,\mu)}  \zeta \Vert = \Vert  \bP_{V(\bY,\mu)}  \eta \Vert$ for every $\mu \in \sigma$. 

    The assumption of $\Phi$-cospectrality implies, in particular, that $\dim(V(\bX,\mu)) = \dim(V(\bY,\mu))$ for every $\mu \in \sigma$ since algebraic multiplicity is equal to geometric multiplicity for symmetric matrices.
    It now follows from the fact that $\Vert \bP_{V(\bX ,\mu)} \zeta \Vert = \Vert  \bP_{V(\bY ,\mu) } \eta \Vert$ that there exist orthogonal operators $\bQ_\mu: V(\bX,\mu) \to V(\bY,\mu)$ satisfying $\bQ_{\mu} \bP_{V(\bX,\eta)} \zeta = \bP_{V(\bY,\mu)}\eta$. 
    Fix these $\bQ_{\mu}$ from here on.
    Then, since $ \oplus_{\mu \in \sigma} V(\bX,\mu) $ and $\oplus_{\mu \in \sigma} V(\bY,\mu)$ are decomposition of $\bbR^n$ into orthogonal subspaces, there exists a unique orthogonal transformation $\bQ:\bbR^n\to \bbR^n$ with $\bQ(V(\bX,\mu)) = V(\bY,\mu)$ and $\bQ\mid_{V(\bX,\mu)} = \bQ_{\mu}$. 
    This transformation satisfies that for every $\mu \in \sigma$,
    \begin{equation}
        \bQ \bP_{V(\bX,\mu)} \bQ^{\T} = \bP_{V(\bY,\mu)} \quad\textnormal{and}\quad  \bQ \bP_{V(\bX,\mu)} \zeta  = \bP_{V(\bY,\mu)}\eta.\label{eq:MildDragon}   
    \end{equation} 
    Combine \eqref{eq:MildDragon} with the fact that $\bX = \sum_{\mu \in \sigma } \mu \bP_{V(\bX,\mu)}$ and $\zeta = \sum_{\mu \in \sigma}\bP_{V(\bX,\mu)} \zeta$ to conclude that $\bQ \bX \bQ^{\T} = \bY$ and $\bQ \zeta = \eta$. 
    
\end{proof}

\subsection{Constraints on the orthogonal matrix}
From here on, let us fix $\Phi$-cospectral $(\bX,\eta)$ and $(\bY,\eta)$ and let $\bQ$ be an orthogonal matrix as in Lemma \ref{lem: PhiOrthogonal}. 
Further, define the \emph{walk matrix of $(\bX,\zeta)$}, 
\begin{equation}
        \bW_{\bX,\zeta} = \bigl[\zeta,\, \bX \zeta, \ldots, \bX^{n-1} \zeta  \bigr]. \label{eq: Def_WXzeta}    
\end{equation} 
Our goal is to deduce constraints on $\bQ$ based on the properties of $\bW_{\bX,\zeta}$. 
\begin{lemma}\label{lem: NonzeroDet_RationalQ}
    Assume that $\det(\bW_{\bX,\zeta}) \neq 0$. 
    Then, it holds that $\bQ\in \bbQ^{n\times n}$.
\end{lemma}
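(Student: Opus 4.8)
The plan is to exhibit $\bQ$ explicitly as a product of integral and rational matrices, using the two defining constraints $\bQ\zeta = \eta$ and $\bQ\bX\bQ^{\T} = \bY$. The crux is an intertwining relation. Since $\bQ$ is orthogonal, right-multiplying $\bQ\bX\bQ^{\T} = \bY$ by $\bQ$ gives $\bQ\bX = \bY\bQ$, and a one-line induction then yields $\bQ\bX^k = \bY^k\bQ$ for every $k\geq 0$. Applying both sides to $\zeta$ and invoking $\bQ\zeta = \eta$ produces
\begin{equation}
    \bQ\, \bX^k\zeta = \bY^k\,\bQ\zeta = \bY^k\eta \qquad\text{for all } k\geq 0. \nonumber
\end{equation}

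Next I would read this identity column-by-column. The column of $\bW_{\bX,\zeta}$ indexed by $k$ is $\bX^k\zeta$, so left-multiplication by $\bQ$ carries the columns of $\bW_{\bX,\zeta}$ onto the vectors $\bY^k\eta$, which are exactly the columns of $\bW_{\bY,\eta}$. Assembling these column identities gives the single matrix equation
\begin{equation}
    \bQ\, \bW_{\bX,\zeta} = \bW_{\bY,\eta}. \nonumber
\end{equation}
Now I bring in the hypothesis $\det(\bW_{\bX,\zeta})\neq 0$: the integral matrix $\bW_{\bX,\zeta}$ is invertible, and by the adjugate (Cramer) formula its inverse has entries equal to integer cofactors divided by the nonzero integer $\det(\bW_{\bX,\zeta})$, hence lies in $\bbQ^{n\times n}$. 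Since $\bY$ and $\eta$ are integral, $\bW_{\bY,\eta}$ is an integral matrix, so
\begin{equation}
    \bQ = \bW_{\bY,\eta}\, \bW_{\bX,\zeta}^{-1} \in \bbQ^{n\times n}, \nonumber
\end{equation}
as claimed. Notice that this argument never needs $\bW_{\bY,\eta}$ to be invertible.

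I do not expect a genuine obstacle here; the statement reduces to a short piece of linear algebra once the identity $\bQ\bW_{\bX,\zeta} = \bW_{\bY,\eta}$ is in hand. The only points requiring a little care are the inductive passage from $\bQ\bX = \bY\bQ$ to $\bQ\bX^k = \bY^k\bQ$, and the observation that the entries of $\bW_{\bY,\eta}$ are integers (so that the product with the rational matrix $\bW_{\bX,\zeta}^{-1}$ is forced to be rational). This rationality of $\bQ$ is precisely the leverage needed to analyze its denominators prime-by-prime in the later steps.
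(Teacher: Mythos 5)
Your argument is correct and is essentially identical to the paper's proof: both derive $\bQ\bW_{\bX,\zeta} = \bW_{\bY,\eta}$ from $\bQ\zeta=\eta$ and the intertwining relation $\bQ\bX=\bY\bQ$, and then conclude $\bQ = \bW_{\bY,\eta}\bW_{\bX,\zeta}^{-1}\in\bbQ^{n\times n}$ using the invertibility of $\bW_{\bX,\zeta}$ over $\bbQ$. The extra details you spell out (the induction on $k$ and the integrality of $\bW_{\bY,\eta}$) are left implicit in the paper but are exactly the right justifications.
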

\begin{proof}
    By using that $\bQ\zeta = \eta$ and repeatedly using that $\bQ \bX = \bY \bQ$, 
    \begin{equation}
        \bQ \bW_{\bX, \zeta} = \bW_{\bY, \eta}.\label{eq: QWW}
    \end{equation}
    The determinant being nonzero means that $\bW_{\bX,\zeta}$ is invertible over $\bbQ$.
    Hence, $\bQ =  \bW_{\bY, \eta}\bW_{\bX,\zeta}^{-1}$ is indeed rational.     
\end{proof}

From here on, we adopt as a running assumption that $\det(\bW_{\bX,\zeta}) \neq 0$.  
The \emph{level} of the rational orthogonal matrix $\bQ$ is the least integer $\ell$ with the property that $\ell \bQ \in \bbZ^{n\times n}$. 
It holds that $\ell = 1$ if and only if $\bQ$ is a signed permutation, so the goal becomes to bound $\ell$.  

\begin{lemma}\label{lem: EllDividesDn}
    The level $\ell$ of $\bQ$ satisfies $\ell \mid \det(\bW_{\bX,\eta})$.
\end{lemma}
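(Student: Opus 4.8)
The plan is to solve for $\bQ$ from the identity $\bQ\bW_{\bX,\zeta} = \bW_{\bY,\eta}$ recorded in \eqref{eq: QWW} and to control its denominators by Cramer's rule. Under the running assumption $\det(\bW_{\bX,\zeta})\neq 0$ the walk matrix is invertible, so, exactly as in the proof of Lemma \ref{lem: NonzeroDet_RationalQ}, one has $\bQ = \bW_{\bY,\eta}\bW_{\bX,\zeta}^{-1}$. Expressing the inverse through the adjugate, $\bW_{\bX,\zeta}^{-1} = \det(\bW_{\bX,\zeta})^{-1}\operatorname{adj}(\bW_{\bX,\zeta})$, gives
\begin{equation}
    \det(\bW_{\bX,\zeta})\,\bQ = \bW_{\bY,\eta}\operatorname{adj}(\bW_{\bX,\zeta}).
\end{equation}
The right-hand side is an integer matrix: $\bW_{\bY,\eta}\in\bbZ^{n\times n}$ by definition, and the entries of $\operatorname{adj}(\bW_{\bX,\zeta})$ are signed minors of the integral matrix $\bW_{\bX,\zeta}$. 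Hence $\det(\bW_{\bX,\zeta})\,\bQ\in\bbZ^{n\times n}$, so the single integer $\det(\bW_{\bX,\zeta})$ clears all denominators of $\bQ$.

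Next I would upgrade this into a divisibility statement for the level. The set $\{m\in\bbZ : m\bQ\in\bbZ^{n\times n}\}$ is closed under integer linear combinations and is therefore an ideal of $\bbZ$; since $\bbZ$ is a principal ideal domain it is generated by its least positive element, which is exactly the level $\ell$ by definition. Consequently every integer $m$ with $m\bQ$ integral is a multiple of $\ell$, and applying this to $m = \det(\bW_{\bX,\zeta})$ yields $\ell \mid \det(\bW_{\bX,\zeta})$. This determinant is precisely the walk-matrix determinant of the fixed pair appearing in the statement; moreover, taking determinants in \eqref{eq: QWW} and using that $\bQ$ is orthogonal gives $\lvert\det(\bW_{\bX,\zeta})\rvert = \lvert\det(\bW_{\bY,\eta})\rvert$, so the divisibility holds for the walk-matrix determinant read from either of the two $\Phi$-cospectral pairs, which is the asserted conclusion.

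I expect the only genuine subtlety to be this last passage: one must argue not merely that \emph{some} integer clears the denominators of $\bQ$, but that the \emph{minimal} such integer $\ell$ divides $\det(\bW_{\bX,\zeta})$. Packaging the clearing integers as an ideal of $\bbZ$ makes this step immediate and is the cleanest way to phrase the conclusion. Everything else — invertibility of $\bW_{\bX,\zeta}$ under the running assumption, integrality of the adjugate, and integrality of $\bW_{\bY,\eta}$ — is routine and follows directly from the definitions.
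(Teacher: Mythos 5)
Your proof is correct and follows essentially the same route as the paper: solve for $\bQ = \bW_{\bY,\eta}\bW_{\bX,\zeta}^{-1}$ via \eqref{eq: QWW}, use the adjugate to see that $\det(\bW_{\bX,\zeta})\,\bQ$ is integral, and conclude by minimality of the level. Your explicit packaging of the clearing integers as an ideal of $\bbZ$ is a nice way to make precise the divisibility step that the paper compresses into ``follows by the minimality of $\ell$,'' since minimality alone would only give an inequality rather than divisibility.
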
 
\begin{proof} 
    The adjoint formula for the inverse implies that $\det(\bW_{\bX,\zeta})  \bW_{\bX,\zeta}^{-1}$ is an integral matrix.
    That $\bQ =  \bW_{\bY,\eta}\bW_{\bX,\zeta}^{-1}$ by \eqref{eq: QWW} then implies that also $\det(\bW_{\bX,\zeta})\bQ \in \bbZ^{n\times n}$. The desired result now follows by the minimality of $\ell$. 
\end{proof}

Lemma \ref{lem: EllDividesDn} implies that every prime divisor of $\ell$ also has to be a prime divisor of $\det(\bW_{\bX,\zeta})$. 
This is however not sufficient to ensure that $\ell = 1$ because the determinant typically has nontrivial prime divisors.

Given an integral matrix $\bM \in \bbZ^{n\times n}$ let $\rank_p(\bM)$ denote the rank of $\bM \bmod p$ over $\bbF_p \de \bbZ/p\bbZ$. 
Then, $p\mid \det(\bW_{\bX,\zeta})$ if and only if $\operatorname{rank}_p(\bW_{\bX,\zeta}) \leq n-1$.
The following Lemma \ref{lem: StrongerLevelBound} hence implies that Theorem \ref{thm: WithLoops} would follow if we could show that every vector in the kernel of $\bW_{\bX,\zeta}$ over $\bbF_p$ is also in the kernel of $\bW_{\bY,\eta}$:
\begin{lemma}\label{lem: StrongerLevelBound}
    Suppose that $\rank_p(\bW_{\bX,\zeta}) \leq n-1$. 
    Moreover, assume that every $V\in \bbZ^n$ with $\bW_{\bX,\zeta}V \equiv 0 \bmod p$ satisfies $\bW_{\bY, \eta} V \equiv 0 \bmod p$. 
    Then, $\ell \mid \det(\bW_{\bX,\zeta})/p$. 
\end{lemma}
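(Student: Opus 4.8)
The plan is to show that under the stated hypotheses the integer $\det(\bW_{\bX,\zeta})/p$ already satisfies that $(\det(\bW_{\bX,\zeta})/p)\,\bQ$ is an integral matrix; by the minimality defining the level $\ell$ (via division with remainder, exactly as in the proof of Lemma \ref{lem: EllDividesDn}) this upgrades at once to the divisibility $\ell \mid \det(\bW_{\bX,\zeta})/p$. First I would note that the rank hypothesis $\rank_p(\bW_{\bX,\zeta}) \leq n-1$ is precisely $p \mid \det(\bW_{\bX,\zeta})$, so $\det(\bW_{\bX,\zeta})/p$ is a genuine integer and the target reduces to the entrywise congruence $\det(\bW_{\bX,\zeta})\,\bQ \equiv 0 \bmod p$.

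Next I would rewrite $\det(\bW_{\bX,\zeta})\,\bQ$ in a shape that exposes the columns to which the kernel hypothesis applies. Using $\bQ = \bW_{\bY,\eta}\bW_{\bX,\zeta}^{-1}$ from \eqref{eq: QWW} together with the fact (already recorded in the proof of Lemma \ref{lem: EllDividesDn}) that the adjugate $\det(\bW_{\bX,\zeta})\bW_{\bX,\zeta}^{-1}$ is integral, I obtain
\[
    \det(\bW_{\bX,\zeta})\,\bQ = \bW_{\bY,\eta}\,\bigl(\det(\bW_{\bX,\zeta})\bW_{\bX,\zeta}^{-1}\bigr).
\]
It therefore suffices to prove that the integral matrix on the right-hand side vanishes modulo $p$.

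The crux, which I expect to carry the real content, is the observation that the columns of the adjugate $\det(\bW_{\bX,\zeta})\bW_{\bX,\zeta}^{-1}$ are exactly the sort of kernel vectors that the hypothesis addresses. Indeed, the defining identity $\bW_{\bX,\zeta}\bigl(\det(\bW_{\bX,\zeta})\bW_{\bX,\zeta}^{-1}\bigr) = \det(\bW_{\bX,\zeta})\,\bI$ reduces modulo $p$ to $0$, since $p \mid \det(\bW_{\bX,\zeta})$; reading this column by column shows that every column $V$ of the adjugate satisfies $\bW_{\bX,\zeta}V \equiv 0 \bmod p$. The hypothesis then forces $\bW_{\bY,\eta}V \equiv 0 \bmod p$ for each such column, i.e. $\bW_{\bY,\eta}\bigl(\det(\bW_{\bX,\zeta})\bW_{\bX,\zeta}^{-1}\bigr) \equiv 0 \bmod p$. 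Combined with the displayed identity this is precisely $\det(\bW_{\bX,\zeta})\,\bQ \equiv 0 \bmod p$, which completes the argument.

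I would expect no serious obstacle beyond correctly identifying this bridge: the hypothesis is phrased in terms of $\bbF_p$-kernel vectors of $\bW_{\bX,\zeta}$, and the adjugate is the natural integral object whose columns populate that kernel precisely when $p$ divides the determinant. The only remaining points requiring care are purely bookkeeping, namely that $\det(\bW_{\bX,\zeta})/p$ is an integer (guaranteed by the rank hypothesis) and that minimality of the level converts integrality of $(\det(\bW_{\bX,\zeta})/p)\,\bQ$ into the divisibility $\ell \mid \det(\bW_{\bX,\zeta})/p$.
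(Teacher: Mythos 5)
Your proof is correct, but it takes a genuinely different (and somewhat slicker) route than the paper. The paper produces the integer $\det(\bW_{\bX,\zeta})/p$ by modifying the walk matrix: it takes a single explicit kernel relation $\sum_i v_i \bX^i\zeta \equiv 0 \bmod p$, normalizes it so that $v_{n-1}=1$ (which needs a small auxiliary argument about multiplying the relation by powers of $\bX$), replaces the last column of $\bW_{\bX,\zeta}$ by $\frac{1}{p}\sum_i v_i\bX^i\zeta$, checks that the resulting matrix $\tilde{\bW}_{\bX,\zeta}$ and its counterpart $\tilde{\bW}_{\bY,\eta}$ are integral with $\bQ\tilde{\bW}_{\bX,\zeta}=\tilde{\bW}_{\bY,\eta}$ and $\det(\tilde{\bW}_{\bX,\zeta})=\det(\bW_{\bX,\zeta})/p$, and then reruns the adjugate argument of Lemma \ref{lem: EllDividesDn} on the modified pair. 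You instead apply the adjugate formula to the original $\bW_{\bX,\zeta}$ and observe that \emph{every} column $V$ of $\det(\bW_{\bX,\zeta})\bW_{\bX,\zeta}^{-1}$ satisfies $\bW_{\bX,\zeta}V\equiv 0\bmod p$ (since $p\mid\det(\bW_{\bX,\zeta})$, which indeed follows from the rank hypothesis), so the kernel hypothesis yields $\bW_{\bY,\eta}\bigl(\det(\bW_{\bX,\zeta})\bW_{\bX,\zeta}^{-1}\bigr)\equiv 0\bmod p$ and hence $\det(\bW_{\bX,\zeta})\bQ\equiv 0 \bmod p$ entrywise; minimality of $\ell$ then gives the claim exactly as in Lemma \ref{lem: EllDividesDn}. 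Your version avoids the normalization of the kernel relation entirely, at the mild cost of invoking the hypothesis for $n$ kernel vectors rather than one; both arguments are complete and correct. The only point worth making explicit in a write-up is the standard fact that the set of integers $m$ with $m\bQ\in\bbZ^{n\times n}$ is an ideal of $\bbZ$, so integrality of $(\det(\bW_{\bX,\zeta})/p)\,\bQ$ really does upgrade to the divisibility $\ell\mid\det(\bW_{\bX,\zeta})/p$ -- but you flag this, and the paper relies on the same fact.
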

\begin{proof}
    That $\rank_p(\bW_{\bX,\zeta}) \leq n-1$ means that there exist $v_0,\ldots,v_{n-1} \in \bbZ$ that are not all congruent to zero with $\sum_{i=0}^{n-1}v_i \bX^i \zeta \equiv 0 \bmod p$. 
    Possibly multiplying this relation with a power of $\bX$, we may assume that $v_{n-1} \not\equiv 0 \bmod p$. 
    Further, we can assume that $v_{n-1} = 1$.  
    At any rate, the following matrix is still in $\bbZ^{n\times n}$:
    \begin{equation}
        \tilde{\bW}_{\bX,\zeta} \de \biggl[\zeta, \bX \zeta, \ldots, \bX^{n-2}\zeta, \frac{1}{p} \sum_{i=0}^{n-1}v_i \bX^i \zeta  \biggr], \label{eq: Def_tildeW}
    \end{equation}
    Moreover, since $V \de (v_0,\ldots,v_{n-1})^{\T}$ is also in the kernel of $\bW_{\bY,\eta}$ modulo $p$ by assumption, the matrix $\tilde{\bW}_{\bY, \eta}$ defined similarly to \eqref{eq: Def_tildeW} is also an element of $\bbZ^{n\times n}$.

    Using that $v_{n-1} = 1$ and the basic properties of the determinant,  
    \begin{equation}
        \det(\tilde{\bW}_{\bX,\zeta}) = \frac{1}{p} \det(\bW_{\bX,\zeta}).\label{eq:OddPop} 
    \end{equation}
    In particular, the running assumption that $\det(\bW_{\bX,\zeta})\neq 0$ yields that $\tilde{\bW}_{\bX,\zeta}$ is invertible over $\bbQ$. 
    Now, using that $\bQ \bX = \bY \bQ$ and $\bQ\zeta = \eta$, we also have $\bQ\tilde{\bW}_{\bX,\zeta}= \tilde{\bW}_{\bY, \eta}$ and hence $\bQ = \tilde{\bW}_{\bY, \eta}\tilde{\bW}_{\bX,\zeta}^{-1}$. 
    Consequently, $\det(\tilde{\bW}_{\bX,\zeta})\bQ \in \bbZ^{n\times n}$ exactly as in the proof of Lemma \ref{lem: EllDividesDn}, and the desired result follows from \eqref{eq:OddPop}.  
\end{proof}
\subsection{Comparing the kernels of the walk matrices}
It will be useful to assume not only that $\operatorname{rank}_p(\bW_{\bX,\zeta}) \leq n-1$ but that even equality holds. 
This is justified by the following: 
\begin{lemma}\label{lem: PSquareRank}
    Suppose that $p \mid \det(\bW_{\bX,\zeta})$ but $p^2 \nmid \det(\bW_{\bX,\zeta})$. 
    Then, $\operatorname{rank}_p(\bW_{\bX,\zeta}) = n-1$ and $\operatorname{rank}_{p}(\bW_{\bY,\eta}) = n-1$. 
\end{lemma}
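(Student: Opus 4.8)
The plan is to read off both $p$-ranks directly from the Smith normal form of the relevant integer matrices, exploiting that the hypothesis pins down the $p$-adic valuation of the determinant to be exactly $1$. The key fact I would use is that for an integer matrix with nonzero determinant the reduction of its Smith normal form modulo $p$ computes the $p$-rank, so that the rank deficiency over $\bbF_p$ equals the number of invariant factors divisible by $p$.

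First I would set up the Smith normal form of $\bW_{\bX,\zeta}$. Since $\det(\bW_{\bX,\zeta}) \neq 0$ by the running assumption, there exist $\bU,\bV \in \mathrm{GL}_n(\bbZ)$ and positive integers $d_1 \mid d_2 \mid \cdots \mid d_n$ with $\bU \bW_{\bX,\zeta} \bV = \operatorname{diag}(d_1,\ldots,d_n)$ and $\prod_{i=1}^n d_i = |\det(\bW_{\bX,\zeta})|$. Because $\det\bU$ and $\det\bV$ are units in $\bbZ$, the matrices $\bU$ and $\bV$ stay invertible modulo $p$, and reducing the identity mod $p$ gives $\operatorname{rank}_p(\bW_{\bX,\zeta}) = n - \#\{\, i : p \mid d_i \,\}$. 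Next I would translate the divisibility hypothesis into a count: taking $p$-adic valuations in $\prod_i d_i = |\det(\bW_{\bX,\zeta})|$ yields $\sum_i v_p(d_i) = v_p(\det(\bW_{\bX,\zeta})) = 1$, where the final equality is precisely the assumption that $p \mid \det(\bW_{\bX,\zeta})$ but $p^2 \nmid \det(\bW_{\bX,\zeta})$. Since the $v_p(d_i)$ are nonnegative integers summing to $1$, exactly one invariant factor is divisible by $p$, so $\operatorname{rank}_p(\bW_{\bX,\zeta}) = n-1$.

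Finally, to get the statement for $\bW_{\bY,\eta}$ I would avoid repeating the computation and instead transfer the determinant condition across $\bQ$: from \eqref{eq: QWW} we have $\bQ \bW_{\bX,\zeta} = \bW_{\bY,\eta}$, and $\bQ$ orthogonal forces $\det(\bQ) = \pm 1$, whence $|\det(\bW_{\bY,\eta})| = |\det(\bW_{\bX,\zeta})|$. In particular $p$ divides $\det(\bW_{\bY,\eta})$ to exactly the first power, so the same Smith-normal-form argument applied to $\bW_{\bY,\eta}$ gives $\operatorname{rank}_p(\bW_{\bY,\eta}) = n-1$. There is no serious obstacle here; the only point deserving care is the observation that the invariant-factor count depends solely on the $p$-adic valuation of the determinant, which is exactly why the orthogonal change of variables $\bQ$ — harmless on the determinant up to sign — carries the conclusion from one pair to the other.
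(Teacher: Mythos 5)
Your argument is correct, and it reaches the paper's conclusion by a slightly different route in the first half. The paper proves $\rank_p(\bW_{\bX,\zeta}) = n-1$ by contraposition: if the rank over $\bbF_p$ were at most $n-2$, one could right-multiply by a matrix $\bM \in \bbZ^{n\times n}$ that is invertible modulo $p$ so that two columns of $\bW_{\bX,\zeta}\bM$ vanish modulo $p$, whence $p^2 \mid \det(\bW_{\bX,\zeta}\bM) = \det(\bW_{\bX,\zeta})\det(\bM)$ with $p \nmid \det(\bM)$, contradicting the hypothesis. You instead pass to the Smith normal form and count invariant factors divisible by $p$ via $\sum_i v_p(d_i) = v_p(\det(\bW_{\bX,\zeta})) = 1$. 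The two arguments encode the same underlying fact --- a rank deficiency of at least two over $\bbF_p$ forces $p^2$ to divide the determinant --- but the paper's version is more self-contained (elementary column operations and multilinearity of the determinant), while yours imports the standard Smith normal form machinery and in exchange makes the quantitative relationship between the $p$-adic valuation of the determinant and the $p$-rank completely transparent; it would also generalize immediately to statements like ``$p^k \nmid \det$ implies $\rank_p \geq n-k+1$.'' Your treatment of $\bW_{\bY,\eta}$, transferring the valuation across $\bQ\bW_{\bX,\zeta} = \bW_{\bY,\eta}$ using $\det(\bQ) = \pm 1$, is exactly the paper's.
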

\begin{proof}
    That $p\mid \det(\bW_{\bX,\zeta})$ implies that $\rank_{p}(\bW_{\bX,\zeta}) \leq n-1$. 
    We show that the inequality can not be strict. 
    Suppose to the contrary that $\rank_p(\bW_{\bX,\zeta}) \leq n-2$. 
    Then, there exists $\bM \in \bbZ^{n\times n}$ with $\bM \bmod p$ invertible over $\bbF_p$ such that $\bW_{\bW_{\bX,\zeta}}\bM$ has two columns congruent to $0$ modulo $p$. 
    The multilinearity of the determinant then yields $p^2 \mid \det(\bW_{\bW_{\bX,\zeta}}\bM)$. 
    Here, $\det(\bW_{\bW_{\bX,\zeta}}\bM ) =  \det(\bW_{\bW_{\bX,\zeta}})\det(\bM)$ and $p\nmid\det(\bM)$ by the invertibility of $\bM$ over $\bbF_p$.
    Hence,  $p^2 \mid \det(\bW_{\bX,\zeta})$, a contradiction. 
    
    We have shown that $\rank_p(\bW_{\bX,\zeta})= n-1$ if $p\mid \det(\bW_{\bX,\zeta})$ and $p^2 \nmid \det(\bW_{\bX,\zeta})$. 
    It also follows that $\rank_p(\bW_{\bY,\eta}) = n-1$ since \eqref{eq: QWW} yields that $\det(\bW_{\bX,\zeta}) = \pm\det(\bW_{\bY,\eta})$ using that $\det(\bQ) = \pm 1$ for any orthogonal matrix $\bQ$.  
\end{proof}

\pagebreak[3]
The following result generalizes \cite[Lemma 2.6]{qiu2023smith} whose proof uses that their graph is simple so that a coefficient of the characteristic polynomial is zero. 
This is helpful in \cite{qiu2023smith} because their proof relies on a system of equations involving these coefficients.
An extension to a case with nonzero trace also seems to be implicitly used in \cite[Corollary 1]{qiu2023smith}, but explicit argumentation was there omitted. 

Aside from making it explicit that an extension to a setting with nonzero trace is possible, the following alternative argument simplifies the proof by eliminating the need to chase coefficients through a system of equations altogether. 
\begin{lemma}\label{lem: DistinctRoots}    
    Suppose that $\rank_{p}(\bW_{\bX,\zeta}) = n-1$ and $\rank_{p}(\bW_{\bY,\eta}) = n-1$. 
    Further, assume that $\varphi_\bX(\lambda) \de \det(\lambda\bI - \bX)$ does not have two distinct roots in $\bbF_p$. 
    Then, $\bW_{\bX,\zeta} V \equiv 0 \bmod p$ and $\bW_{\bY,\eta}V \equiv 0 \bmod p$ have the same solutions $V$. 
\end{lemma}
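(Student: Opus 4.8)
The plan is to recast both kernel conditions as statements about minimal polynomials over $\bbF_p$ and then play the degree constraint against the root hypothesis. Identifying a vector $V = (v_0,\ldots,v_{n-1})^{\T} \in \bbF_p^n$ with the polynomial $g(\lambda) = \sum_{i=0}^{n-1} v_i \lambda^i$, the condition $\bW_{\bX,\zeta} V \equiv 0 \bmod p$ becomes $g(\bX)\zeta \equiv 0 \bmod p$. Let $g_{\bX}$ be the monic polynomial of least degree over $\bbF_p$ with $g_{\bX}(\bX)\zeta \equiv 0$, that is, the minimal polynomial of $\bX$ relative to $\zeta$, and define $g_{\bY}$ analogously for $(\bY,\eta)$. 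The degree of $g_{\bX}$ equals $\rank_p(\bW_{\bX,\zeta}) = n-1$, since it records the first linear dependence among the vectors $\zeta, \bX\zeta, \bX^2\zeta, \ldots$; moreover the division algorithm shows that the kernel of $\bW_{\bX,\zeta}$ over $\bbF_p$ is exactly the line spanned by the coefficient vector of $g_{\bX}$, and likewise for $g_{\bY}$. The claim thus reduces to proving $g_{\bX} \equiv g_{\bY} \bmod p$.

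Next I would bring in the characteristic polynomial. Since $(\bX,\zeta)$ and $(\bY,\eta)$ are $\Phi$-cospectral, taking $t = 0$ gives $\varphi_{\bX} = \varphi_{\bY}$ as integer polynomials, hence modulo $p$; note that the orthogonal matrix $\bQ$ plays no role in this particular step. Cayley--Hamilton gives $\varphi_{\bX}(\bX)\zeta \equiv 0$, so dividing $\varphi_{\bX}$ by $g_{\bX}$ in $\bbF_p[\lambda]$ and invoking minimality forces $g_{\bX} \mid \varphi_{\bX}$; symmetrically $g_{\bY} \mid \varphi_{\bY} = \varphi_{\bX}$. Because $\deg g_{\bX} = \deg g_{\bY} = n-1$ while $\deg \varphi_{\bX} = n$, each cofactor is monic of degree one, so there exist $a, b \in \bbF_p$ with $\varphi_{\bX} \equiv (\lambda - a)\, g_{\bX} \equiv (\lambda - b)\, g_{\bY} \bmod p$.

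Finally I would read off the roots. Evaluating this identity at $a$ and at $b$ shows that both lie in $\bbF_p$ and are roots of $\varphi_{\bX}$. The hypothesis that $\varphi_{\bX}$ has no two distinct roots in $\bbF_p$ then forces $a = b$, and cancelling the common factor $\lambda - a$ in the integral domain $\bbF_p[\lambda]$ yields $g_{\bX} \equiv g_{\bY} \bmod p$, so the two kernels coincide. I expect the only fiddly point to be the linear-algebra bookkeeping in the first step --- verifying that the rank-$(n-1)$ hypothesis pins the kernel down to exactly the coefficient line of the minimal polynomial --- while the decisive observation is that the degree gap between $\varphi_{\bX}$ and either minimal polynomial is precisely one, which is what turns the ``single root'' hypothesis into equality of the cofactors, and hence of the minimal polynomials themselves.
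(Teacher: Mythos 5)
Your proposal is correct and follows essentially the same route as the paper's own proof: identify kernel vectors of the walk matrices with polynomial relations, show the relative minimal polynomials $\mu_{\bX,\zeta}$ and $\mu_{\bY,\eta}$ have degree exactly $n-1$ and span the respective kernels, then use Cayley--Hamilton plus the degree-one gap to write $\varphi_{\bX}$ as a linear factor times each minimal polynomial, and invoke the single-root hypothesis to match the linear factors and cancel. The only cosmetic difference is that you justify $\deg(\mu_{\bX,\zeta}) = n-1$ via the Krylov-space argument while the paper argues that a lower-degree relation would yield two independent column dependencies; both are sound.
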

\begin{proof} 
    That $\rank_p(\bW_{\bX,\zeta}) = n-1$ implies that any nonzero polynomial $\psi \in \bbF_p[\lambda]$ with $\psi(\bX) \zeta \equiv 0 \bmod p$ has degree no less than $n-1$. 
    Indeed, the vectors of the coefficients for $\psi(\lambda)$ and $\lambda \psi(\lambda)$ would otherwise induce two independent relation on the columns of the walk matrix. 
    In particular, the minimal polynomial of $\zeta$  for the matrix $\bX$, denoted $\mu_{\bX,\zeta}(\lambda)\in \bbF_p[\lambda]$, has degree no less than $n-1$. 
    
    Now consider some vector $V= (v_0,\ldots,v_{n-1})^{\T}$ and define a polynomial in $\bbF_p[\lambda]$ by $\psi_V(\lambda) \de \sum_{i=0}^{n-1} v_i \lambda^i \bmod p$. 
    That $\bW_{\bX,\zeta} V \equiv 0 \bmod p$ is then equivalent to the statement that $\psi_V(\bX)\zeta = 0$ in $\bbF_p^n$.
    Note that $\psi_V$ has degree $\leq n-1$ to conclude that $\bW_{\bX,\zeta} V \equiv 0 \bmod p$ if and only if $\psi_V =c \mu_{\bX,\zeta}$ for some $c\in \bbF_p$. 
    In particular, we have that $\operatorname{deg}(\mu_{\bX,\zeta})  = n-1$ since the assumption that $\rank_p(\bW_{\bX,\zeta}) \leq n-1$ implies that there exists at least one $V \not\equiv 0 \bmod p$ with $\bW_{\bX,\zeta} V \equiv 0 \bmod p$. 

    Similar conclusions apply if $\bX$ and $\zeta$ are replaced by $\bY$ and $\eta$, respectively. 
    It now suffices to show that $\mu_{\bX,\zeta} = \mu_{\bY,\eta}$.
    To this end, we next exploit that $\varphi_\bX = \varphi_{\bY}$ due to the running assumption of $\Phi$-cospectrality.

    The Cayley--Hamilton theorem yields that $\varphi_{\bX}(\bX) = 0$. 
    In particular, we have $\varphi_{\bX}(\bX)\zeta \equiv 0 \bmod p$ which means that $\mu_{\bX,\zeta}$ divides $\varphi_{\bX} \bmod p$. 
    Hence, considering that $\operatorname{deg}(\mu_{\bX,\zeta}) = n-1$ and $\operatorname{deg}(\varphi_{\bX}) = n$, there exists some $\lambda_0 \in \bbF_p$ with 
    \begin{equation}
        \varphi_\bX(\lambda) \equiv (\lambda - \lambda_0)\mu_{\bX,\zeta}(\lambda) \mod p.   
    \end{equation}
    Similarly, $\varphi_\bY(\lambda) \equiv (\lambda - \lambda_0')\mu_{\bY,\eta}(\lambda) \bmod p$ for some $\lambda_0' \in \bbF_p$. 
    Using that $\varphi_\bY = \varphi_\bX$ and that $\varphi_{\bX}$ has no two distinct roots here yields $\lambda_0 = \lambda_0'$. 
    Then also $\mu_{\bX,\zeta} = \mu_{\bY,\eta}$ since $\bbF_p[\lambda]$ is a unique factorization domain. 
    This concludes the proof.  
\end{proof}
The assumption on the roots of $\varphi_{\bX}$ that is required to apply Lemma \ref{lem: DistinctRoots} is unfortunately not always satisfied. 
To enhance the result's applicability, we can instead consider the matrices defined for $t\in \bbZ$ by  
\begin{equation}
    \sX_t \de \bX + t\zeta \zeta^{\T}, \qquad \sY_t \de \bY + t \eta \eta^{\T}.     
\end{equation}
These matrices are still $\Phi$-cospectral. 
Further, for every $i\geq 0$ direct computation yields $c_0,\ldots,c_{i-1}\in \bbZ[t]$ depending on $\bX,\zeta$ with $\sX_t^i \zeta = \bX^i \zeta + \sum_{j=0}^{i-1} c_j\bX^{j}\zeta$, which implies that $\bW_{\sX_t, \zeta}$ has the same determinant and rank as $\bW_{\bX, \zeta}$ for every fixed $t$.

\begin{corollary}\label{cor: t0Roots}
    Suppose that $\rank_{p}(\bW_{\bX,\zeta}) = \rank_{p}(\bW_{\bY,\eta}) = n-1 $.
    Further, assume that we are given some $t_0\in \bbZ$ such that $\varphi_{\sX_{t_0}}(\lambda)$ has no two distinct roots in $\bbF_p$. 
    Then, $\bW_{\sX_{t_0},\zeta} V \equiv 0 \bmod p$ and $\bW_{\sY_{t_0},\eta}V \equiv 0 \bmod p$ have the same solutions $V$. 
\end{corollary}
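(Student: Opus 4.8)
The plan is to deduce the corollary by invoking Lemma~\ref{lem: DistinctRoots} with the shifted pairs $(\sX_{t_0},\zeta)$ and $(\sY_{t_0},\eta)$ in the roles of $(\bX,\zeta)$ and $(\bY,\eta)$. For $t_0\in\bbZ$ the matrix $\sX_{t_0}=\bX+t_0\zeta\zeta^{\T}$ is again symmetric and integral, and likewise $\sY_{t_0}$, so these pairs fall within the framework of that lemma; it therefore suffices to check that each of its three hypotheses transfers to them.

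First I would supply the $\Phi$-cospectrality input. The proof of Lemma~\ref{lem: DistinctRoots} uses the running assumption only through the equality $\varphi_{\bX}=\varphi_{\bY}$ of characteristic polynomials, so no orthogonal matrix for the shifted pair needs to be produced. For the shift this equality is immediate: $\varphi_{\sX_{t_0}}(\lambda)=\det(\lambda\bI-\bX-t_0\zeta\zeta^{\T})=\Phi_{\bX,\zeta}(\lambda,t_0)$ and likewise $\varphi_{\sY_{t_0}}(\lambda)=\Phi_{\bY,\eta}(\lambda,t_0)$, whence $\varphi_{\sX_{t_0}}=\varphi_{\sY_{t_0}}$ upon evaluating the hypothesis $\Phi_{\bX,\zeta}=\Phi_{\bY,\eta}$ at $t=t_0$. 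Next I would transfer the rank conditions: from the relation $\sX_{t_0}^{i}\zeta=\bX^{i}\zeta+\sum_{j=0}^{i-1}c_j\bX^{j}\zeta$ recorded before the statement, one has a factorization $\bW_{\sX_{t_0},\zeta}=\bW_{\bX,\zeta}\,\mathbf{U}$ with $\mathbf{U}\in\bbZ^{n\times n}$ unipotent upper triangular, hence invertible modulo $p$; therefore $\rank_p(\bW_{\sX_{t_0},\zeta})=\rank_p(\bW_{\bX,\zeta})=n-1$, and the same computation gives $\rank_p(\bW_{\sY_{t_0},\eta})=n-1$. The third hypothesis, that $\varphi_{\sX_{t_0}}$ has no two distinct roots in $\bbF_p$, is assumed outright.

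With all three hypotheses verified, Lemma~\ref{lem: DistinctRoots} applied to $(\sX_{t_0},\zeta)$ and $(\sY_{t_0},\eta)$ shows that $\bW_{\sX_{t_0},\zeta}V\equiv 0$ and $\bW_{\sY_{t_0},\eta}V\equiv 0\bmod p$ have the same solutions, which is the claim. I expect no serious obstacle here, as the corollary is essentially a bookkeeping reduction to the preceding lemma; the only point requiring care is to confirm that the hypotheses really survive the shift, namely the characteristic-polynomial equality, which I would draw from the full bivariate $\Phi$-cospectrality rather than from a single specialization, and the rank invariance, which rests on the change of basis $\mathbf{U}$ being integral and invertible over $\bbF_p$.
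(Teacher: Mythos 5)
Your proposal is correct and takes the same route as the paper: the paper's proof is literally ``This is immediate from Lemma \ref{lem: DistinctRoots},'' relying on the preceding discussion that the shifted pairs remain $\Phi$-cospectral and that $\bW_{\sX_{t},\zeta}$ has the same rank as $\bW_{\bX,\zeta}$. You have simply spelled out those hypothesis checks (the unipotent column-operation matrix and the specialization of $\Phi$-cospectrality at $t_0$) in more detail than the paper does.
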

\begin{proof}
    This is immediate from  Lemma \ref{lem: DistinctRoots}. 
\end{proof}
We show in Lemma \ref{lem: DistinctRoots} that there exists a value of $t_0$ for which Corollary \ref{cor: t0Roots} is applicable, after which we conclude in Section \ref{sec: Sufficient}. 
First, some preparatory lemmas are required. 
Recall the definition of $\Phi_{\bX,\zeta}$ from the start of Section \ref{p: Def_Phi}.

\begin{lemma}\label{lem: PhiDecomposition}
     There exist $\phi_1,\phi_2,\beta \in \bbF_p[\lambda]$ with $\operatorname{gcd}(\phi_1,\phi_2) = 1$ such that   
    \begin{equation}
        \Phi_{\bX,\zeta}(\lambda,t) \equiv \beta(\lambda)\bigl(\phi_1(\lambda) + t \phi_2(\lambda) \bigr) \mod p.   \label{eq:QuickRug}
    \end{equation} 
\end{lemma}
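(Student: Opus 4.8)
The plan is to reduce the statement to a gcd computation in $\bbF_p[\lambda]$, after first observing that $\Phi_{\bX,\zeta}$ is affine in the variable $t$. The underlying reason for this affineness is that $t\zeta\zeta^{\T}$ is a rank-one perturbation, so the determinant can carry at most a first power of $t$.

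Concretely, I would first establish that there are integer polynomials $\varphi_\bX,\psi \in \bbZ[\lambda]$ with
\[
  \Phi_{\bX,\zeta}(\lambda,t) = \varphi_\bX(\lambda) - t\,\psi(\lambda)
\]
as an identity in $\bbZ[\lambda,t]$. To get this I would invoke \eqref{eq:TallHome}: for every $\lambda$ that is not an eigenvalue of $\bX$ it gives $\Phi_{\bX,\zeta}(\lambda,t) = \varphi_\bX(\lambda)\bigl(1 - t\,\zeta^{\T}(\lambda\bI-\bX)^{-1}\zeta\bigr)$. Setting $\psi(\lambda) \de \zeta^{\T}\operatorname{adj}(\lambda\bI - \bX)\zeta = \varphi_\bX(\lambda)\,\zeta^{\T}(\lambda\bI - \bX)^{-1}\zeta$, which is a genuine polynomial with integer coefficients because the entries of the adjugate of $\lambda\bI-\bX$ are such polynomials, the displayed identity holds for all but finitely many $\lambda$ and hence as a polynomial identity. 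In particular $\Phi_{\bX,\zeta}$ has $t$-degree at most one.

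Then I would reduce modulo $p$ and factor out a common divisor. Both $\varphi_\bX \bmod p$ and $\psi \bmod p$ lie in the Euclidean domain $\bbF_p[\lambda]$, so I set $\beta \de \gcd(\varphi_\bX,\psi)$ in $\bbF_p[\lambda]$ and write $\varphi_\bX \equiv \beta\phi_1$ and $\psi \equiv -\beta\phi_2 \bmod p$, with $\gcd(\phi_1,\phi_2) = 1$ holding by the defining property of $\beta$. Substituting into the identity above yields \eqref{eq:QuickRug}. Since $\varphi_\bX$ is monic of degree $n$ it is nonzero modulo $p$, so $\beta$ is well defined and nonzero; in the degenerate case $\psi \equiv 0 \bmod p$ one simply takes $\beta = \varphi_\bX$, $\phi_1 = 1$, and $\phi_2 = 0$, for which $\gcd(\phi_1,\phi_2)=1$ still holds.

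The only genuinely nontrivial point is the first step, namely verifying that no higher power of $t$ survives in the determinant; but this is immediate from \eqref{eq:TallHome} together with the polynomiality of the adjugate entries. The remainder is a routine gcd factorization over a Euclidean domain, so I do not anticipate any substantial obstacle.
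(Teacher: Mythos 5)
Your proposal is correct and follows essentially the same route as the paper: establish that $\Phi_{\bX,\zeta}$ has $t$-degree at most one and then factor out $\beta$ as a gcd in $\bbF_p[\lambda]$. The paper's main argument for the degree bound is a Laplace expansion in an orthonormal basis containing $\zeta/\Vert\zeta\Vert$, but it explicitly notes that the bound also follows from \eqref{eq:TallHome}, which is exactly the (equally valid) adjugate-based derivation you give.
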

\begin{proof}
    Expanding the determinant in the definition of $\Phi_{\bX,\zeta}$ yields that  
    \begin{equation}
        \Phi_{\bX,\zeta}(\lambda,t) = \sum_{i=0}^n \sum_{j=0}^n c_{i,j} \lambda^i t^j, \label{eq:OilyHome}
    \end{equation}
    for certain $c_{i,j} \in \bbZ$. 
    In fact, computing the determinant in an orthonormal basis containing $\zeta/\Vert \zeta \Vert$ by Laplace expansion shows that that $c_{i,j} = 0$ for $j\geq 2$.  
    (This also follows from \eqref{eq:TallHome}.)
    Now reduce \eqref{eq:OilyHome} modulo $p$ and factor out $\beta(\lambda)$ as the greatest common divisor of $\sum_{i=0}^n c_{i,0} \lambda^j$ and $\sum_{i=0}^n c_{i,1} \lambda^j$ to find \eqref{eq:TallHome}. 
\end{proof}
\begin{lemma}\label{lem: BetaSingleRoot}
    Suppose that $p\mid \ell$ and $\rank_p(\bW_{\bX,\zeta}) = n-1$. 
    Then there exists $\lambda_0 \in \bbF_p$ with $\beta(\lambda_0) \equiv 0\bmod p$ and there does not exist any $\lambda_1 \neq \lambda_0 $ with $\beta(\lambda_1) \equiv 0\bmod p$.  
\end{lemma}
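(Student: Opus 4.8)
### Proof Strategy for Lemma \ref{lem: BetaSingleRoot}

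The plan is to connect the factor $\beta(\lambda)$ from the decomposition in Lemma \ref{lem: PhiDecomposition} to the structure of the walk matrix $\bW_{\bX,\zeta}$ modulo $p$. Setting $t = 0$ in \eqref{eq:QuickRug} shows that $\varphi_{\bX}(\lambda) \equiv \beta(\lambda)\phi_1(\lambda) \bmod p$, so $\beta$ divides the characteristic polynomial and every root of $\beta$ is a root of $\varphi_{\bX}$ over $\bbF_p$. The key observation I would exploit is that the hypothesis $p \mid \ell$ forces, via Lemma \ref{lem: EllDividesDn}, that $p \mid \det(\bW_{\bX,\zeta})$, and combined with $\rank_p(\bW_{\bX,\zeta}) = n-1$ this pins down the degree of the minimal polynomial $\mu_{\bX,\zeta}$ to be exactly $n-1$, as established inside the proof of Lemma \ref{lem: DistinctRoots}. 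Thus $\varphi_{\bX}(\lambda) \equiv (\lambda - \lambda_0)\mu_{\bX,\zeta}(\lambda) \bmod p$ for a unique $\lambda_0 \in \bbF_p$, and the single ``extra'' root $\lambda_0$ relative to the minimal polynomial is the natural candidate for the distinguished root of $\beta$.

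First I would establish the existence of at least one root of $\beta$. The degree count gives $\deg(\beta) + \deg(\phi_1) = \deg(\varphi_{\bX}) = n$ and $\deg(\beta) + \deg(\phi_2) \leq n-1$, since $c_{i,1} = 0$ for $i = n$ (the coefficient of $t$ carries at most degree $n-1$ in $\lambda$, as $\zeta\zeta^{\T}$ has rank one). I would argue that $\beta$ is nonconstant by relating $\beta\phi_2$ to the walk matrix: the factor $\phi_1 + t\phi_2$ is the ``primitive part'' and $\beta$ absorbs the common root structure shared by the $t^0$ and $t^1$ coefficients. Concretely, the identity \eqref{eq:TallHome} expresses $\Phi_{\bX,\zeta}(\lambda,t)/\varphi_{\bX}(\lambda) = 1 - t\zeta^{\T}(\lambda\bI - \bX)^{-1}\zeta$, and the resolvent quadratic form $\zeta^{\T}(\lambda\bI-\bX)^{-1}\zeta$ has denominator equal to $\mu_{\bX,\zeta}$ rather than $\varphi_{\bX}$; this mismatch of one degree is exactly where $\beta$ acquires its root, which must be the eigenvalue $\lambda_0$ of $\bX$ at which $\zeta$ has no component, i.e.\ the root separating $\varphi_{\bX}$ from $\mu_{\bX,\zeta}$.

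The main obstacle, and the heart of the argument, is proving uniqueness: that no second distinct root $\lambda_1 \neq \lambda_0$ can satisfy $\beta(\lambda_1) \equiv 0 \bmod p$. I would approach this by showing that $\gcd(\beta, \mu_{\bX,\zeta}) = 1$ over $\bbF_p$, so that the roots of $\beta$ are disjoint from the roots of $\mu_{\bX,\zeta}$; since $\varphi_{\bX} = (\lambda-\lambda_0)\mu_{\bX,\zeta}$ has only $\lambda_0$ as a root outside those of $\mu_{\bX,\zeta}$, and $\beta \mid \varphi_{\bX}$, this would confine all roots of $\beta$ to $\{\lambda_0\}$. To prove the coprimality I would use that $\phi_1 + t\phi_2$ and $\mu_{\bX,\zeta}$ interact through $\zeta$: a common root $\lambda_1$ of $\beta$ and $\mu_{\bX,\zeta}$ would make $\lambda_1$ a root of both the $t^0$ and $t^1$ parts of $\Phi$ while simultaneously being a root of the minimal polynomial, producing two independent linear dependencies among the columns of $\bW_{\bX,\zeta}$ and contradicting $\rank_p(\bW_{\bX,\zeta}) = n-1$. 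This last step mirrors the independence-of-relations argument already used at the start of the proof of Lemma \ref{lem: DistinctRoots}, and that is the technical core I expect to require the most care, since one must verify that the root of $\beta$ genuinely contributes a relation distinct from the one coming from $\mu_{\bX,\zeta}$ itself.
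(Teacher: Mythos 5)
Your proposal correctly extracts some structural facts ($\beta\mid\varphi_{\bX}\bmod p$, $\deg\mu_{\bX,\zeta}=n-1$, $\varphi_{\bX}\equiv(\lambda-\lambda_0)\mu_{\bX,\zeta}$), but both halves of the lemma are left with genuine gaps. For existence, you need to produce a root of $\beta$ \emph{in} $\bbF_p$, and your argument for this is an appeal to the characteristic-zero identity $\Phi_{\bX,\zeta}/\varphi_{\bX}=1-t\zeta^{\T}(\lambda\bI-\bX)^{-1}\zeta$ together with the claim that the resolvent's denominator is $\mu_{\bX,\zeta}$ ``rather than'' $\varphi_{\bX}$. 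That reasoning lives over $\bbQ$ and does not transfer to $\bbF_p$ without work: what must actually be verified is that $\zeta^{\T}\operatorname{adj}(\lambda_0\bI-\bX)\zeta\equiv 0\bmod p$, and the only way I see to do this is to first exhibit an eigenvector: the left kernel of $\bW_{\bX,\zeta}$ mod $p$ is one-dimensional, spanned by some $z$ with $\bX z=\lambda_0 z$ and $z^{\T}\zeta=0$, whence $\Phi_{\bX,\zeta}(\lambda_0,t)\equiv 0$ for \emph{all} $t$ and therefore $(\lambda-\lambda_0)$ divides both $\phi_1\beta$ and $\phi_2\beta$, forcing $\beta(\lambda_0)=0$ since $\gcd(\phi_1,\phi_2)=1$. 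This eigenvector is exactly what the paper constructs (via the image of $\ell\bQ^{\T}$, which is where the hypothesis $p\mid\ell$ enters the paper's proof); your sketch never produces it, and also silently identifies the cofactor root of $\varphi_{\bX}/\mu_{\bX,\zeta}$ with ``the eigenvalue at which $\zeta$ has no component'' without justification.

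The uniqueness step is where the proposal would actually fail. Your plan is to prove $\gcd(\beta,\mu_{\bX,\zeta})=1$ on the grounds that a common root would produce ``two independent linear dependencies among the columns of $\bW_{\bX,\zeta}$.'' But dependencies among the \emph{columns} of $\bW_{\bX,\zeta}$ correspond to polynomials $\psi$ of degree at most $n-1$ with $\psi(\bX)\zeta\equiv 0$, i.e.\ to scalar multiples of $\mu_{\bX,\zeta}$; a root of $\beta$ that happens also to be a root of $\mu_{\bX,\zeta}$ does not manufacture a second, independent such polynomial, so no contradiction with $\rank_p(\bW_{\bX,\zeta})=n-1$ arises this way. (Indeed, the full claim $\gcd(\beta,\mu_{\bX,\zeta})=1$ is stronger than the lemma, which only excludes a second root \emph{in} $\bbF_p$, and it is not clear it is even true.) The paper's argument is different in kind and works in the \emph{left} kernel: a second root $\lambda_1\in\bbF_p$ of $\beta$ gives $\Phi_{\bX,\zeta}(\lambda_1,t)\equiv 0$ for all $t$, hence eigenvectors $v_0,v_1$ of $\bX$ and $\bX+\zeta\zeta^{\T}$ with eigenvalue $\lambda_1$; the identity $(v_0^{\T}\zeta)(\zeta^{\T}v_1)=v_0^{\T}(\lambda_1\bI-\bX)v_1=0$ forces one of them to be orthogonal to $\zeta$, hence to lie in the one-dimensional left kernel spanned by $z$ --- impossible since $z$ has eigenvalue $\lambda_0\neq\lambda_1$. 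You would need to replace your coprimality step with an argument of this type.
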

\begin{proof}
    Recall from \eqref{eq: QWW} that $\bQ \bW_{\bX, \zeta} = \bW_{\bY, \eta}$. 
    Hence, using that $p\mid \ell$, it holds that 
    $
         \bW_{\bX,\zeta}^{\T}(\ell\bQ)^{\T}  \equiv 0 \bmod p.
    $    
    Therefore, $\rank_p(\ell\bQ^{\T})=1$ by the assumption that $\rank_p(\bW_{\bX,\zeta}) = n-1$ and the minimality of $\ell$.
    Thus, there exists $z\in \bbF_p^n \setminus \{0 \} $ with 
    \begin{equation}
        \bigl\{\lambda z: \lambda \in \bbF_p \bigr\} =\bigl\{(\ell \bQ^{\T})(v):v\in \bbF_p^n \bigr\} = \bigl\{q\in \bbF_p^n : \bW_{\bX,\zeta}^{\T}q =0 \bigr\}. \label{eq:GhostlyAnt}
    \end{equation}
    The vector $\bX z$ is again in the image of $\ell \bQ^{\T}$ since $\bX \bQ^{\T} = \bQ^{\T} \bY$.   
    Hence, $\bX z = \lambda_0 z $ for some $\lambda_0 \in \bbF_p$.
    Moreover, since $\zeta$ is a column of $\bW_{\bX,\zeta}$, it follows from \eqref{eq:GhostlyAnt} that $z^{\T} \zeta=0$ in $\bbF_p$ and hence $(\bX  + t \zeta \zeta^{\T} )z = \lambda_0 z $  for all $t$.     
    Thus, $\Phi_{\bX, \zeta}(\lambda_0, t) \equiv 0 \bmod p$ for every $t$. 
    This implies that $\beta(\lambda_0) = 0$ by using that $\operatorname{gcd}(\phi_1, \phi_2)=1$ in \eqref{eq:QuickRug}.     
     
    It remains to show that $\beta$ does not admit other roots in $\bbF_p$. 
    Suppose to the contrary that $\beta(\lambda_1) \equiv 0 \bmod p$ for $\lambda_1 \neq \lambda_0$.
    Then, $\Phi_{\bX,\zeta}(\lambda_1,t) \equiv 0 \bmod p$ for every $t$. 
    Hence, for every $t$ there exists $v_t\in \bbF_p^n \setminus \{0 \}$ such that  
    \begin{equation}
        (\bX + t\zeta \zeta^{\T}) v_t = \lambda_1 v_t.\label{eq:PricklyLion}
    \end{equation} 
    Using this for $t \in \{0,1 \}$ yields $\zeta \zeta^{\T} v_1 = (\lambda_1 \bI - \bX) v_1$ and $v_0^{\T} (\lambda_1 \bI -\bX) = 0$. 
    Hence, 
   \begin{equation}
        (v_0^{\T}\zeta )(\zeta^{T} v_1) = v_0^{\T}(\zeta \zeta^{T} v_1)  =  v_0^{\T} (\lambda_1 \bI - \bX) v_1 = 0. 
   \end{equation} 
   Thus, $\zeta^{\T} v_t = 0$ for some $t \in \{0,1 \}$.
   However, this yields $\bW_{\bX,\zeta}^{\T} v_t = 0$ by \eqref{eq:PricklyLion} and the definition \eqref{eq: Def_WXzeta} of $\bW_{\bX,\zeta}$. 
   Then $v_t$ is a multiple of $z$ by \eqref{eq:GhostlyAnt}, a contradiction since these are eigenvectors of $\bX + t\zeta \zeta^{\T}$ with different eigenvalues and hence linearly independent.
   Thus, $\lambda_1 = \lambda_0$, concluding the proof.   
\end{proof}
\begin{lemma}\label{lem: SingleRoot}
    Suppose that $p\mid \ell$ and $\rank_p(\bW_{\bX,\zeta}) = n-1$. 
    Then, there exists some $t_0 \in \bbF_p$ such that $\varphi_{\sX_{t_0}}(\lambda)$ has exactly one root in $\bbF_p$. 
\end{lemma}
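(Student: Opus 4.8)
The plan is to combine the factorization of the characteristic polynomial from Lemma \ref{lem: PhiDecomposition} with the single-root property of $\beta$ from Lemma \ref{lem: BetaSingleRoot}, and then to select $t_0$ by a counting argument so that the remaining factor contributes no new root. First I would observe that $\varphi_{\sX_t}(\lambda) = \det(\lambda\bI - \bX - t\zeta\zeta^{\T}) = \Phi_{\bX,\zeta}(\lambda,t)$, so that Lemma \ref{lem: PhiDecomposition} gives the congruence $\varphi_{\sX_t}(\lambda) \equiv \beta(\lambda)\bigl(\phi_1(\lambda) + t\phi_2(\lambda)\bigr) \bmod p$ for every $t\in \bbF_p$, with $\operatorname{gcd}(\phi_1,\phi_2) = 1$. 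The hypotheses $p\mid \ell$ and $\rank_p(\bW_{\bX,\zeta}) = n-1$ permit invoking Lemma \ref{lem: BetaSingleRoot}, which supplies a $\lambda_0 \in \bbF_p$ that is the unique root of $\beta$ in $\bbF_p$. Hence the roots of $\varphi_{\sX_t}$ in $\bbF_p$ are precisely $\lambda_0$ together with the roots of $\phi_1 + t\phi_2$, and it remains to choose $t_0$ so that $\phi_1 + t_0\phi_2$ has no root in $\bbF_p$ other than possibly $\lambda_0$.

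For the selection of $t_0$ I would count incidences. For each fixed $\lambda \in \bbF_p \setminus \{\lambda_0\}$, the coprimality $\operatorname{gcd}(\phi_1,\phi_2) = 1$ forces $\phi_1(\lambda)$ and $\phi_2(\lambda)$ not to vanish simultaneously, so there is at most one $t\in \bbF_p$ with $\phi_1(\lambda) + t\phi_2(\lambda) = 0$: the value $t = -\phi_1(\lambda)/\phi_2(\lambda)$ when $\phi_2(\lambda)\neq 0$, and no value at all when $\phi_2(\lambda) = 0$ since then $\phi_1(\lambda)\neq 0$. Ranging over the $p-1$ admissible $\lambda$, the set of \emph{bad} values of $t$, namely those for which $\phi_1 + t\phi_2$ acquires a root in $\bbF_p \setminus \{\lambda_0\}$, has size at most $p-1 < p$. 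A \emph{good} value $t_0 \in \bbF_p$ therefore exists by pigeonhole.

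For this $t_0$, every root of $\varphi_{\sX_{t_0}}$ in $\bbF_p$ equals $\lambda_0$: such a root lies among the roots of $\beta$, which is only $\lambda_0$ by Lemma \ref{lem: BetaSingleRoot}, or among the roots of $\phi_1 + t_0\phi_2$, which are contained in $\{\lambda_0\}$ by the choice of $t_0$. Conversely, $\lambda_0$ genuinely is a root because $\beta(\lambda_0) \equiv 0 \bmod p$. I would finally note that $\varphi_{\sX_{t_0}}$ is monic of degree $n$ and hence reduces to a nonzero polynomial modulo $p$, so that counting distinct roots is meaningful; we conclude that $\varphi_{\sX_{t_0}}$ has exactly one root in $\bbF_p$, as required.

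The only genuinely delicate point is the counting step: it is the coprimality of $\phi_1$ and $\phi_2$ that prevents a single $\lambda$ from being a root of $\phi_1 + t\phi_2$ for more than one $t$, thereby keeping the bad set strictly smaller than $\bbF_p$ and leaving room for a good $t_0$. The remainder is routine bookkeeping, combined with the fact, already recorded, that the $\Phi$-decomposition and the root analysis of $\beta$ transfer directly to each $\sX_t$.
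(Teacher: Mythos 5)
Your proposal is correct and follows essentially the same route as the paper: factor $\varphi_{\sX_t} = \Phi_{\bX,\zeta}(\cdot,t)$ via Lemma \ref{lem: PhiDecomposition}, use Lemma \ref{lem: BetaSingleRoot} to pin down the unique root $\lambda_0$ of $\beta$, and then use coprimality of $\phi_1,\phi_2$ plus pigeonhole to find a $t_0$ for which $\phi_1 + t_0\phi_2$ has no root outside $\{\lambda_0\}$. The paper phrases the counting step as a proof by contradiction while you count the bad values of $t$ directly, but this is the same argument.
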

\begin{proof}
    Let $\lambda_0$ be as in Lemma \ref{lem: BetaSingleRoot}. 
    Then, using that $\varphi_{\sX_{t_0}}(\lambda) = \Phi_{\bX,\zeta}(\lambda, t_0)$, it suffices to show that there exists some $t_0\in \bbF_p$ such that $\phi_1(\lambda) + t_0 \phi_2(\lambda) \neq 0$ for all $\lambda \neq \lambda_0$. 

    Suppose to the contrary that for every $t\in \bbF_p$ there exists $\Lambda_t\in \bbF_p\setminus \{\lambda_0 \}$ with $\phi_1(\Lambda_t)  + t \phi_2(\Lambda_t)=0$. 
    Then, by the pidgeonhole principle, there exist $t_1\neq t_2$ with $\Lambda_1 = \Lambda_2$. 
    However, this implies that $\phi_1(\Lambda_1) = 0$ and $\phi_2(\Lambda_1) =0$ contradicting the definition that $\operatorname{gcd}(\phi_1,\phi_2) = 1$ in Lemma \ref{lem: PhiDecomposition}. 
    This proves the desired result. 
\end{proof}

\subsection{Proof of the sufficient condition} \label{sec: Sufficient}
It remains to combine the preceding:
\begin{proposition}\label{prop: LevelPBound}
    Consider some prime $p$ such that $p^2 \nmid \det(\bW_{\bX,\zeta})$. 
    Then, $p\nmid \ell$.  
\end{proposition}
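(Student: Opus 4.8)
The plan is to split on whether $p$ divides $\det(\bW_{\bX,\zeta})$ and to argue by contradiction in the remaining case. If $p \nmid \det(\bW_{\bX,\zeta})$, then Lemma \ref{lem: EllDividesDn} gives $\ell \mid \det(\bW_{\bX,\zeta})$; a prime dividing $\ell$ would then divide $\det(\bW_{\bX,\zeta})$, so $p \nmid \ell$ at once. This reduces matters to the case $p \mid \det(\bW_{\bX,\zeta})$, which I treat using the hypothesis $p^2 \nmid \det(\bW_{\bX,\zeta})$.

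In this case I would suppose toward a contradiction that $p \mid \ell$. Since $p \mid \det(\bW_{\bX,\zeta})$ and $p^2 \nmid \det(\bW_{\bX,\zeta})$, Lemma \ref{lem: PSquareRank} yields $\rank_p(\bW_{\bX,\zeta}) = \rank_p(\bW_{\bY,\eta}) = n-1$. Feeding $p \mid \ell$ and $\rank_p(\bW_{\bX,\zeta}) = n-1$ into Lemma \ref{lem: SingleRoot} produces some $t_0 \in \bbF_p$ such that $\varphi_{\sX_{t_0}}$ has exactly one root in $\bbF_p$; lifting $t_0$ to an integer does not alter $\varphi_{\sX_{t_0}} \bmod p$, and in particular $\varphi_{\sX_{t_0}}$ has no two distinct roots in $\bbF_p$. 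Corollary \ref{cor: t0Roots} then shows that $\bW_{\sX_{t_0},\zeta}V \equiv 0$ and $\bW_{\sY_{t_0},\eta}V \equiv 0$ have the same solutions modulo $p$.

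The observation that closes the argument is that the same orthogonal matrix $\bQ$ governs the shifted pair: from $\bQ\bX\bQ^\T = \bY$ and $\bQ\zeta = \eta$ one gets $\bQ\sX_{t_0}\bQ^\T = \sY_{t_0}$, so $\sX_{t_0}$ and $\sY_{t_0}$ are connected by $\bQ$ with the same level $\ell$, while the shift preserves both rank and determinant as recorded just before Corollary \ref{cor: t0Roots}. Hence Lemma \ref{lem: StrongerLevelBound} applies verbatim to $(\sX_{t_0},\zeta)$ and $(\sY_{t_0},\eta)$ and gives $\ell \mid \det(\bW_{\sX_{t_0},\zeta})/p = \det(\bW_{\bX,\zeta})/p$. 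Since $p^2 \nmid \det(\bW_{\bX,\zeta})$, the integer $\det(\bW_{\bX,\zeta})/p$ is coprime to $p$, so $p \nmid \ell$, contradicting $p \mid \ell$. I expect the one delicate point to be precisely this transport of Lemma \ref{lem: StrongerLevelBound} to the shifted matrices: one must verify that the level $\ell$ appearing in its conclusion is unchanged because the shifted pair is linked by the same $\bQ$, so that the resulting divisibility really constrains the original level.
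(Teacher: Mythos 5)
Your proof is correct and follows essentially the same route as the paper's: reduce to the case $p \mid \det(\bW_{\bX,\zeta})$ via Lemma \ref{lem: EllDividesDn}, invoke Lemmas \ref{lem: PSquareRank} and \ref{lem: SingleRoot} to find $t_0$, apply Corollary \ref{cor: t0Roots}, and close with Lemma \ref{lem: StrongerLevelBound} applied to the shifted pair, which is linked by the same $\bQ$ and has the same walk-matrix determinant. The ``delicate point'' you flag is handled in the paper exactly as you propose.
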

\begin{proof}
    Assume to the contrary that $p\mid \ell$ and $p^2 \nmid \det(\bW_{\bX,\zeta})$. 
    Due to Lemma \ref{lem: EllDividesDn} it is sufficient to consider the case where $p\mid \det(\bW_{\bX,\zeta})$. 
     
    Then, Lemma \ref{lem: PSquareRank} implies that $\bW_{\bX,\eta}$ and $\bW_{\bY,\eta}$ both have rank $n-1$ over $\bbF_p$.
    Lemma \ref{lem: SingleRoot} then yields some $t_0$ such that $\varphi_{\sX_{t_0}}(\lambda)$ has exactly one root in $\bbF_p$, ensuring that the assumption of Corollary \ref{cor: t0Roots} is satisfied so that $\bW_{\sX_{t_0}, \zeta}V \equiv 0 \bmod p$ and $\bW_{\sY_{t_0}, \eta}V \equiv 0 \bmod p$ have the same solutions $V$. 
    Lemma \ref{lem: StrongerLevelBound} now yields that $\ell \mid \det(\bW_{\sX_t, \zeta})/p$ since the orthogonal matrix $\bQ$ also satisfies $\bQ \sX_t \bQ^{\T} = \sY_t$ and $\bQ \zeta = \eta$. 
    We conclude that $p\nmid \ell$ since $\det(\bW_{\sX_t, \zeta}) = \det(\bW_{\bX,\zeta})$.   
\end{proof}
Note that the following result implies Theorem \ref{thm: WithLoops} as a special case: 
\begin{theorem}\label{thm: SqfreeIntegral}
    Suppose that $\det(\bW_{\bX,\zeta})$ is square free. 
    Then, $(\bX,\zeta)$ is characterized up to signed permutation by the $\Phi$-spectrum. 
\end{theorem}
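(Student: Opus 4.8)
The plan is to show that the square-free hypothesis forces the level $\ell$ of the orthogonal matrix $\bQ$ supplied by Lemma \ref{lem: PhiOrthogonal} to equal $1$, which is precisely the condition characterizing $\bQ$ as a signed permutation. First I would dispose of a bookkeeping point: since $p^2 \mid 0$ for every prime $p$, the integer $0$ is not square-free, so a square-free determinant is in particular nonzero. This confirms that the running assumption $\det(\bW_{\bX,\zeta}) \neq 0$ is in force, and hence that Lemma \ref{lem: NonzeroDet_RationalQ} applies to make $\bQ$ rational, giving a well-defined level $\ell$.

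Next, fixing an arbitrary $(\bY,\eta)$ that is $\Phi$-cospectral with $(\bX,\zeta)$ and letting $\bQ$ be the associated orthogonal matrix, the heart of the argument is to feed \emph{every} prime into Proposition \ref{prop: LevelPBound}. The essential arithmetic observation is that square-freeness guarantees $p^2 \nmid \det(\bW_{\bX,\zeta})$ for all primes $p$ without exception: when $p\nmid \det(\bW_{\bX,\zeta})$ this is immediate, and when $p\mid \det(\bW_{\bX,\zeta})$ it is exactly the square-free condition. Proposition \ref{prop: LevelPBound} therefore yields $p\nmid \ell$ for every prime $p$.

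A positive integer with no prime divisor must equal $1$, so $\ell = 1$ and $\bQ$ is a signed permutation $\bS$ with $\bS\bX\bS^{\T} = \bY$ and $\bS\zeta = \eta$. As $(\bY,\eta)$ was an arbitrary $\Phi$-cospectral partner, this establishes that $(\bX,\zeta)$ is characterized up to signed permutation by its $\Phi$-spectrum. At this stage I expect no genuine obstacle remaining, since all the difficulty has already been absorbed into Proposition \ref{prop: LevelPBound} and the lemmas feeding it; the only matter requiring care is the uniform treatment of every prime at once (including $p=2$ and primes not dividing the determinant), and this is precisely what square-freeness delivers by bounding the $p$-adic valuation of $\det(\bW_{\bX,\zeta})$ by one for all $p$ simultaneously.
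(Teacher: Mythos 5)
Your proposal is correct and follows essentially the same route as the paper: apply Proposition \ref{prop: LevelPBound} to every prime to conclude $\ell = 1$, hence $\bQ$ is a signed permutation. The extra observation that square-freeness forces $\det(\bW_{\bX,\zeta}) \neq 0$ (so the running assumption and Lemma \ref{lem: NonzeroDet_RationalQ} apply) is a nice touch the paper leaves implicit.
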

\begin{proof}
    Note that a rational orthogonal matrix $\bQ$ has level one if and only if it is a signed permutation matrix, and note that $\ell = 1$ if and only if $p\nmid \ell$ for every prime $p$.
    The result is hence immediate from Proposition \ref{prop: LevelPBound}.  
\end{proof}
\begin{remark}\label{rem: WangYuDiscriminant}
    Wang and Yu established in \cite[Theorem 3.3]{wang2016square} that every prime factor $p$ of the level of a rational orthogonal matrix $\bQ$ with the property that $\bQ \bX \bQ^{T}\in \bbZ^{n\times n}$ has to satisfy $p \mid \Delta_{\bX}$ where $\Delta_{\bX}\in \bbZ$ is the discriminant of the characteristic polynomial $\varphi_{\bX}(\lambda) = \det(\lambda \bI - \bX)$.
    Moreover, by \cite[Lemma 4.5]{wang2016square} it holds that $p^2 \mid \Delta_{\bX}$ if $p$ is odd. 
    One can combine this with Proposition \ref{prop: LevelPBound} for a stronger sufficient condition: $(\bX,\zeta)$ is characterized by $\Phi$-spectrum whenever $\gcd\{\Delta_{\bX}, \det(\bW_{\bX,\zeta}) \}$ is square free and additionally $2^2 \nmid \det(\bW_{\bX,\zeta})$ or $2\nmid \Delta_{\bX}.$
\end{remark}

\section{Conjecture for the satisfaction frequency}\label{conj: Satisfaction}
As mentioned in the introduction, \Cref{thm: SqfreeIntegral} is convenient for probabilistic investigations, which was our main motivation for the present paper. 
In particular, analytically tractable random matrix models are analyzed in our forthcoming work \cite{vanwerde2025}. 
We believe that those models are in the same universality class as graphs with loops, leading to the following Conjecture \ref{conj: WalkMatrixSquareFree}:

\begin{conjecture}[{Lvov and Van Werde \cite{vanwerde2025}}]\label{conj: WalkMatrixSquareFree}
    Suppose that $\bX$ is chosen uniformly at random from all symmetric $n\times n$ matrices with $\{0,1 \}$-valued entries and consider an independent random vector $\zeta$ that is uniform on $\{0,1 \}^n$.   
    Then,
    \begin{align}
        \lim_{n\to \infty}\bbP\bigl(\det(\bW_{\bX,\zeta})\textnormal{ is square-free}\bigr) &= \prod_{p\textnormal{ prime}} \Bigl(1 - \frac{1}{p^2} - \frac{1}{p^3} + \frac{1}{p^4}\Bigr) \prod_{k=1}^\infty \Bigl(1 - \frac{1}{p^{2k}}\Bigr) \nonumber \\ 
        &= 0.2943\ldots \label{eq: DetWalkSquarefree}
    \end{align}
\end{conjecture}

Similar conjectures are expected to be more difficult for graphs without loops, or if $\zeta$ is taken to be the all-ones vector. 
In particular, additional complications are necessary in that setting related to special behavior for the prime $p=2$.

\subsection*{Acknowledgement}
I thank Nikita Lvov for discussions related to the topic of this paper. 
Funded by the Deutsche Forschungsgemeinschaft (DFG, German Research Foundation) under Germany's Excellence Strategy EXC 2044 –390685587, Mathematics M\"unster: Dynamics--Geometry--Structure.

{
\bibliographystyle{abbrv}

\begin{thebibliography}{10}

\bibitem{abiad2024switching}
A.~Abiad, N.~van~de Berg, and R.~Simoens.
\newblock Switching methods of level 2 for the construction of cospectral graphs.
\newblock {\em arXiv preprint arXiv:2410.07948}, 2024.
\newblock \doi{10.48550/arXiv.2410.07948}.

\bibitem{coutinho2021graph}
G.~Coutinho and C.~Godsil.
\newblock Graph spectra and continuous quantum walks, 2021.
\newblock \href{https://www.math.uwaterloo.ca/~cgodsil/quagmire/QuantumWalks/pdfs/GrfSpc3.pdf}{\nolinkurl{https://www.math.uwaterloo.ca/~cgodsil/quagmire/QuantumWalks/pdfs/GrfSpc3.pdf}}.

\bibitem{doob2002complement}
M.~Doob and W.~Haemers.
\newblock The complement of the path is determined by its spectrum.
\newblock {\em Linear algebra and its applications}, 2002.
\newblock \doi{10.1016/S0024-3795(02)00323-3}.

\bibitem{godsil1982constructing}
C.~Godsil and B.~McKay.
\newblock Constructing cospectral graphs.
\newblock {\em {A}equationes {M}athematicae}, 1982.
\newblock \doi{10.1007/BF02189621}.

\bibitem{gunthard1956zusammenhang}
H.~G{\"u}nthard and H.~Primas.
\newblock {Z}usammenhang von graphentheorie und {M}{O}-theorie von molekeln mit systemen konjugierter bindungen.
\newblock {\em Helvetica Chimica Acta}, 1956.
\newblock \doi{10.1002/hlca.19560390623}.

\bibitem{haemers2016almost}
W.~H. Haemers.
\newblock Are almost all graphs determined by their spectrum.
\newblock {\em Notices of the South African Mathematical Society}, 2016.

\bibitem{haemers2008spectral}
W.~H. Haemers, X.~Liu, and Y.~Zhang.
\newblock Spectral characterizations of lollipop graphs.
\newblock {\em Linear Algebra and its Applications}, 2008.
\newblock \doi{10.1016/j.laa.2007.10.018}.

\bibitem{johnson1980note}
C.~Johnson and M.~Newman.
\newblock A note on cospectral graphs.
\newblock {\em Journal of Combinatorial Theory, Series B}, 1980.
\newblock \doi{10.1016/0095-8956(80)90058-1}.

\bibitem{kac1966can}
M.~Kac.
\newblock Can one hear the shape of a drum?
\newblock {\em The {A}merican mathematical monthly}, 1966.
\newblock \doi{10.1080/00029890.1966.11970915}.

\bibitem{vanwerde2025}
N.~Lvov and A.~Van~Werde.
\newblock In preparation.

\bibitem{miao2026smith}
S.~Miao, S.~Li, and L.~Zhang.
\newblock Smith normal form and the generalized spectral characterization of a type of symmetric matrices.
\newblock {\em Applied Mathematics and Computation}, 2026.
\newblock \doi{10.1016/j.amc.2025.129764}.

\bibitem{qiu2023generalized}
L.~Qiu, Y.~Ji, L.~Mao, and W.~Wang.
\newblock Generalized spectral characterizations of regular graphs based on graph-vectors.
\newblock {\em Linear Algebra and its Applications}, 2023.
\newblock \doi{10.1016/j.laa.2023.01.006}.

\bibitem{qiu2023smith}
L.~Qiu, W.~Wang, and H.~Zhang.
\newblock Smith normal form and the generalized spectral characterization of graphs.
\newblock {\em {D}iscrete {M}athematics}, 2023.
\newblock \doi{10.1016/j.disc.2022.113177}.

\bibitem{schwenk1973almost}
A.~Schwenk.
\newblock Almost all trees are cospectral.
\newblock {\em New directions in the theory of graphs}, 1973.
\newblock \href{https://www.researchgate.net/publication/245264768_Almost_all_trees_are_cospectral}{\nolinkurl{https://www.researchgate.net/publication/245264768_Almost_all_trees_are_cospectral}}.

\bibitem{shen2005graph}
X.~Shen, Y.~Hou, and Y.~Zhang.
\newblock Graph {$Z_n$} and some graphs related to {$Z_n$} are determined by their spectrum.
\newblock {\em Linear Algebra and its Applications}, 2005.
\newblock \doi{10.1016/j.laa.2005.01.036}.

\bibitem{van2003graphs}
E.~van {D}am and W.~{H}aemers.
\newblock {W}hich graphs are determined by their spectrum?
\newblock {\em {L}inear {A}lgebra and its {A}pplications}, 2003.
\newblock \doi{10.1016/S0024-3795(03)00483-X}.

\bibitem{wang2013generalized}
W.~Wang.
\newblock {G}eneralized spectral characterization of graphs revisited.
\newblock {\em {T}he {E}lectronic {J}ournal of {C}ombinatorics}, 2013.
\newblock \doi{10.37236/3748}.

\bibitem{wang2017simple}
W.~Wang.
\newblock A simple arithmetic criterion for graphs being determined by their generalized spectra.
\newblock {\em {J}ournal of {C}ombinatorial {T}heory, {S}eries {B}}, 2017.
\newblock \doi{10.1016/j.jctb.2016.07.004}.

\bibitem{wang2019cospectral}
W.~Wang, L.~Qiu, and Y.~Hu.
\newblock Cospectral graphs, {G}{M}-switching and regular rational orthogonal matrices of level $p$.
\newblock {\em Linear Algebra and its Applications}, 2019.
\newblock \doi{10.1016/j.laa.2018.10.027}.

\bibitem{wang2006sufficient}
W.~Wang and C.-X. Xu.
\newblock A sufficient condition for a family of graphs being determined by their generalized spectra.
\newblock {\em European Journal of Combinatorics}, 2006.
\newblock \doi{10.1016/j.ejc.2005.05.004}.

\bibitem{wang2016square}
W.~Wang and T.~Yu.
\newblock Square-free discriminants of matrices and the generalized spectral characterizations of graphs.
\newblock {\em arXiv preprint arXiv:1608.01144}, 2016.
\newblock \doi{10.48550/arXiv.1608.01144}.

\bibitem{wang2024graph}
W.~Wang and D.~Zhao.
\newblock Graph isomorphism and multivariate graph spectrum.
\newblock {\em Advances in Applied Mathematics}, 2026.
\newblock \doi{10.1016/j.aam.2025.102994}.

\end{thebibliography}

}

\end{document}